\newcommand{\qee} {\hspace*{2mm}\hfill \ding{109}}
\renewcommand{\iff}{\leftrightarrow}
\renewcommand{\phi}{\varphi}
\renewcommand{\Theta}{\varTheta}
\renewcommand{\Phi}{\varPhi}
\renewcommand{\Psi}{\varPsi}
\renewcommand{\Xi}{\varXi}
\renewcommand{\Omega}{\varOmega}
\renewcommand{\Gamma}{\varGamma}
\newcommand{\eq}[1]{\mathop{\sf E}_{\sf #1}}
\newcommand{\quot}[1]{{\sf #1}/{\sf E}_{\sf #1}}
\newcommand{\freg}[2]{#1\mathop{\sf F}#2}
\newcommand{\fregl}[3]{#2\mathop{{\sf F}_{#1}}#3}
\newcommand{\modf}[1]{\widetilde{#1}}
\newtheorem{theorem}{Theorem}[section]
\newtheorem{define}[theorem]{Definition}
\newtheorem{exa}[theorem]{Example}
\newenvironment{example}{\begin{exa} \rm}{\qee\end{exa}}
\newtheorem{exerc}[theorem]{Exercise}
\newtheorem{conj}[theorem]{Conjecture}
\newtheorem{ques}[theorem]{Open Question}
\newtheorem{lemma}{Lemma}[section]
\newtheorem{corollary}{Corollary}[section]
\newtheorem{rem}[theorem]{Remark}
\newenvironment{remark}{\begin{rem} \rm}{\qee\end{rem}}
\newcommand{\To}{\Rightarrow}
\newcommand{\verz}[1]{\{ #1 \}}
\newcommand{\bin}{\mathbin{\in}}
 \newcommand{\tupel}[1]{{\langle #1 \rangle}}
 \newcommand{\mc}[1]{\mathcal #1}
\title{When Bi-interpretability implies Synonymy}
\author{Harvey M. Friedman$^{\dag,\ddag}$}
 \address{Department of Mathematics, Mathematics Building,
         231 West 18th Avenue,
         Columbus, OH 43210, USA
}
\email{friedman@math.ohio-state.edu}
\author{Albert Visser$^\ddag$}
 \address{Philosophy, Faculty of Humanities,
                Utrecht University,
               Janskerkhof 13,
               3512 BL~~Utrecht, The Netherlands}
\email{a.visser@uu.nl}
\keywords{Interpretations, Interpretability, Schr\"oder-Bernstein Theorem}
\subjclass[2010]
{03A05, 
03B30, 
03F25
}
\thanks{$^\dag$Harvey M. Friedman,  
Distinguished University Professor of Mathematics, Philosophy, and Computer Science, Emeritus,
Ohio State University, Columbus, Ohio 43235.
This research was partially supported by the John Templeton Foundation grant ID $\# 36297$. 
The opinions expressed here are those of the author and do not necessarily reflect the views of the John Templeton Foundation.}
\thanks{$^\ddag$We thank Tonny Hurkens who simplified our proof of the Schr\"oder-Bernstein Theorem.
We present his version here with his permission.
We thank Allan van Hulst who verified our proof of the Schr\"oder-Bernstein Theorem in Mizar.
We are grateful to Peter Aczel, Ali Enayat and Wilfrid Hodges for stimulating conversations or e-mail correspondence.
We are grateful to Tim Button and to Leszek Ko{\l}odziejczyk who independently spotted a mistake in
an earlier version of this paper.}
\begin{document}

\begin{abstract}
Two salient notions of sameness of theories are {\em synonymy}, aka {\em definitional equivalence}, and
{\em bi-interpretability}. Of these two {\em definitional equivalence} is the strictest notion. In which cases can
we infer synonymy from bi-interpretability? We study this question for the case of sequential theories.
Our result is as follows. Suppose that two sequential theories are bi-interpretable and that the interpretations
involved in the bi-interpretation are one-dimensional and identity preserving. 
Then, the theories are synonymous. 

The crucial ingredient of our proof is a version of the Schr\"oder-Bernstein theorem 
under very weak conditions. We think this last result has some
independent interest.

We provide an example
to show that this result is optimal. There are two finitely axiomatized sequential theories that are bi-interpretable but not synonymous, 
where precisely one of the interpretations involved in the bi-interpretation is not identity preserving.   
\end{abstract}

\maketitle

\section{Introduction}
When are two theories the same? Are there reasonable ways of abstracting away from the precise choice
of the signature? The notions of {\em synonymy} (or: {\em definitional
equivalence})  and {\em bi-interpretability} provide two good answers to these questions.

The notion of {synonymy} was introduced by de Bouv\`ere in 1965. See \cite{bouv:logi65} and \cite{bouv:syno65}. 
It  appears to be the strictest notion of sameness of theories except strict identity of signature and set of theorems.
Two theories $U$ and $V$  are synonymous iff there is a theory $W$ that is both a definitional extension
of $U$ and of $V$ (where we allow the signatures of $U$ and $V$ to be made disjoint). 
Equivalently, $U$ and $V$ are synonymous iff there are interpretations
$K\mathop {:}U\to V$ and $M\mathop{:}V\to U$, such that $V$ proves that the composition $K\circ M$  is
the identity interpretation on $V$ and such that  $U$ proves that the composition $M\circ K$ of is
the identity interpretation on $U$. (Thus, synonymy is isomorphism in an appropriate category ${\sf INT}_0$ of theories 
and interpretations.) There are many familiar examples of synonymy. First, a theory is synonymous with a definitional extension.
For example, Peano Arithmetic with $0$, {\sf S}, $+$ and $\times$ is synonymous with its extension with $<$ and the additional axiom
$x < y \iff \exists z\; x+{\sf S}z =y$. Then, we have examples that are considered by mathematicians to be trivial alternative
formulations like the theory of weak partial order and the theory of strong partial order. Finally, there are more substantial examples.
E.g., Peano Arithmetic, {\sf PA}, is synonymous with an appropriate theory of strings and concatenation.
See \cite{corc:stri74} and, for a slightly weaker classical result, \cite{quin:conc46}.\footnote{In fact, it is simpler to prove that these theories
are bi-interpretable via one-dimensional, identity preserving interpretations. Then, the results of the present paper immediately yield
the synonymy.}

The notion of {\em bi-interpretability}  was introduced by Alhbrandt and Ziegler in 1986. See \cite{ahlb:quas86} and, also,
\cite{hodg:mode93}. 
Two theories $U$ and $V$ are bi-interpretable iff there are interpretations
$K:U\to V$ and $M:V\to U$, such that there is a $V$-definable function $F$, such that $V$ proves that $F$ is an isomorphism between
 $K\circ M$ and
the identity interpretation on $V$ and such that there is a $U$-definable function $G$, such that
 $U$ proves that $G$ is an isomorphism between $M\circ K$ and
the identity interpretation on $U$. (Thus, bi-interpretability is isomorphism in an appropriate category ${\sf INT}_1$ of theories 
and interpretations.) 

In terms of models the notion of bi-interpretability takes the following form.
We note that an interpretation $K:U\to V$ gives us a uniform construction of an internal model $\modf{K}(\mathcal M)$ of $U$ in
a model $\mathcal M$ of $V$. We find that $U$ and $V$ are bi-interpretable iff, there are
interpretations $K:U\to V$ and $M:V\to U$ and formulas $F$ and $G$, such that, for all models
$\mathcal M$ of $V$,  the formula $F$ defines in $\mathcal M$ an isomorphism  between $\mathcal M$ and
$\modf{M}\modf{K}(\mathcal M)$, and, 
for all models
$\mathcal N$ of $U$, the formula $G$ defines in $\mathcal N$ an isomorphism between $\mathcal N$ and
$\modf{K}\modf{M}(\mathcal N)$. 

Bi-interpretability has a lot of good properties. E.g., it preserves  automorphism groups, $\kappa$-categoricity, finite
axiomatizability, etc. Still the stricter notion synonymy preserves more. For example, synonymy preserves the action of the
automorphism group on the domain of the model. Bi-interpretability (without parameters)
does preserve the automorphism group modulo isomorphism but does not necessarily preserve the action on the
domain. See Section~\ref{counter}  for an example illustrating this difference. 
An example of a property of theories that is not preserved under bi-interpretability (not even under definitional
equivalence) is having a computable model. See \cite{pakh:tenn22}.

Surprisingly, it is not easy to provide natural examples of pairs of theories that are bi-interpretable but not synonymous.
For example, Peano Arithmetic {\sf PA} is \emph{prima facie} bi-interpretable with an appropriate 
theory of the hereditarily finite sets. However, on closer
inspection, these theories are also synonymous. See Subsection~\ref{finitesets}.
In Section~\ref{counter}, we give a verified example of two finitely axiomatized sequential theories that
are bi-interpretable but not synonymous.
 
Our interest in this paper is in the relationship between synonymy and bi-in\-ter\-preta\-bi\-lity for a special class of theories,
{\em the sequential theories}. These are theories that have coding of sequences.  Examples of sequential theories are
Buss's theory ${\sf S}^1_2$, Elementary Arithmetic {\sf EA} or {\sf EFA}, $\mathrm{I}\Sigma_1$, {\sf ZF}, {\sf ZFC}. 
We explain in detail what sequential theories are in Section~\ref{sequentiality}. 

We will show that, for {\em identity-preserving} interpretations between sequential theories, synonymy and 
bi-interpretability coincide (Section~\ref{fromto}). In fact the proof works for a
somewhat wider class: {\em the conceptual theories}. This last class was just defined here
to pin down (more) precisely what is needed for the theorem.

 A central ingredient of our proof is the Schr\"oder-Bernstein Theorem that turns out to hold under
 surprisingly weak conditions. We give the proof of the
 Schr\"oder-Bernstein Theorem under these weak conditions in Section~\ref{schrbe}.
 
 \subsection{Historical Remark}
 The first preprint of this paper came out in the Logic Group Preprint Series of Utrecht University in 2014. 
 On August 4, 2023, Albert Visser received an e-mail from Leszek Ko{\l}odziejczyk reporting that he found a counter-example
 to Theorem 5.4 of the preprint and, on September 6, 2023, Visser received an e-mail from Tim Button that he found a mistake
 in the proof of Theorem 5.4 with suggestions on how to fix it. Fortunately, none of the further results of the paper rested
 on the mistaken Theorem, but only on the correct Corollary 5.5. So, we  eliminated the mistaken theorem. 
 Corollary 5.5 of the earlier version is Theorem~\ref{sterkesmurf} of the present version.

\section{Basic Notions}
In this section, we formulate the basic notions employed in the paper.
We keep the definitions here at an informal level. More detailed definitions
are given in Appendix~\ref{definitions}.

\subsection{Theories}

The primary focus in this paper is on one-sorted theories of first order predicate logic
of  relational signature. We take identity to be a logical constant. 
Our official signatures are relational, however, via the term-unwinding algorithm,
we can also accommodate signatures with functions. Many-sorted theories appear
as an auxiliary in the study of one-sorted theories. We will only consider theories with 
a finite number of sorts. 

The results of the paper that are stated for one-sorted theories can be lifted to the many-sorted case in a fairly 
obvious way. We choose to restrict ourselves to the one-sorted case to keep the presentation reasonably light.  

In this paper, no restriction is needed on the complexity of the set of axioms of a theory or on the size of the signature.

\subsection{Interpretations}
We describe the notion of an $m$-dimensional interpretation for a one-sorted language.
An interpretation $K:U\to V$ is given by the theories $U$ and $V$ and a translation $\tau$  from the language
of $U$ to the language of $V$. The translation is given by a domain formula  $\updelta(\vec x\,)$, where $\vec x$ is a sequence
of $m$ variables, and a mapping from
the predicates of $U$ to formulas of $V$, where an $n$-ary predicate
$P$ is mapped to a formula $A(\vec x_0,\ldots,\vec x_{n-1})$, where the $\vec x_j$ are
appropriately chosen pairwise disjoint sequences of $m$ variables. 
We lift the translation to the full language in the obvious way,
making it commute with the propositional connectives and quantifiers, where we relativize the translated quantifiers
to the domain $\updelta$. We demand that $V$ proves all the translations of theorems of $U$.

We can compose interpretations in the obvious way. Note that the composition of an
$n$-dimensional interpretation with an $m$-dimensional interpretation is $m \times n$-dimensional. 

A 1-dimensional interpretation is  {\em  identity preserving} if translates identity to identity.
A 1-dimensional interpretation is {\em unrelativized} if its domain consists of all the objects
of the interpreting theory.  
A 1-dimensional interpretation is  {\em direct} if it is unrelativized
and preserves identity. Note that all these properties are preserved by composition.

Each interpretation $K\mathop{:}U\to V$ gives us an inner model construction that
builds a model $\modf{K}(\mathcal M)$ of $U$ out of a model
$\mathcal M$ of $V$. Note that $\modf{(\cdot)}$ behaves contravariantly.

If we want to use interpretations to analyze sameness of theories,
we will need, as we will see, to be able to say when two interpretations are
`equal'. Strict identity of interpretations is too fine grained. It depends too
much on arbitrary choices like the selection of bound variables.
We specify a first notion of equality between interpretations: two interpretations
are {\em equal}  when the {\em target theory thinks they are}.
Modulo this identification, the operations
identity and composition
give rise to a category ${\sf INT}_0$, where the theories are objects and
the interpretations arrows.\footnote{For many reasons, the choice for
the reverse direction of the arrows would be more natural.
However, our present choice coheres with the extensive tradition 
in degrees of interpretability. So, we opted to adhere to the usual choice.}

Let \textsf{MOD} be the category with as objects classes of models
and as morphisms all functions between these classes.
We define $\textsf{Mod}(U)$ as the class of all models of $U$.
Suppose $K\mathop{:}U\to V$. Then, $\textsf{Mod}(K)$ is the function
from $\textsf{Mod}(V)$ to $\textsf{Mod}(U)$ given by:
$\mathcal M \mapsto \modf{K}({\mathcal M})$.
It is  clear that {\sf Mod} is a {\em contravariant functor} from ${\sf INT}_0$
to \textsf{MOD}.

\subsection{Sameness of Interpretations}

For each sufficiently good notion of sameness of interpretations there is
 an associated category of theories and interpretations: the category of interpretations
 modulo that notion of sameness.
 Any such a category gives us a notion of isomorphism of theories which can function as a notion
 of sameness.
 
We present a {\em basic list} of salient notions of sameness. For all items in the list
it is easily seen that sameness is preserved by composition. Our list does not have any pretence of
being complete. For example, we omitted notions of sameness based on Ehrenfeucht games.

\subsubsection{Equality}

The interpretations $K,K':U\to V$ are equal when
$V$ `thinks' $K$ and $K'$ are identical. 
By the Completeness Theorem, this is equivalent to saying that, for all $V$-models $\mathcal M$,
$\modf{K}({\mathcal M}) = \modf{K'}({\mathcal M})$. This notion gives rise to the category ${\sf INT}_0$.
Isomorphism in ${\sf INT}_0$ is {\em synonymy} or {\em definitional equivalence}.

\subsubsection{i-Isomorphism}

An i-isomorphism between interpretations $K,M:U\to V$ is given by a $V$-formula
$F$. We demand that $V$ verifies that ``$F$ is an isomorphism between $K$ and $M$'', or, equivalently,
that, for each model $\mathcal M$ of $V$, the function $F^{\mathcal M}$ is an isomorphism between
$\modf{K}(\mathcal M)$ and $\modf{M}(\mathcal M)$. 

Two interpretations $K,K':U\to V$, are {\em i-isomorphic} iff
 there is an i-isomorphism between
$K$ and $K'$. Wilfrid Hodges calls this notion: {\em homotopy}.
See \cite{hodg:mode93}, p222.
 
We can also define the notion of being i-isomorphic semantically.  
The interpretations $K,K':U\to V$, are {\em i-isomorphic} iff
 there is  $V$-formula $F$ such that for all $V$-models
$\mathcal M$, the relation $F^{\mathcal M}$ is an isomorphism between
 $\modf{K}({\mathcal M})$ and $\modf{K'}({\mathcal M})$. 
 
 In case the signature of $U$ is finite, being i-isomorphic has a third characterization.
The interpretations  $K,K':U\to V$, are {\em i-isomorphic} iff, for every $V$-model $\mathcal M$, there is an
 $\mathcal M$-definable isomorphism between $\modf{K}({\mathcal M})$ and 
 $\modf{K'}({\mathcal M})$. (See Theorem~\ref{isoloc}.) 

Clearly, if $K,K'$ are equal in the sense of the previous subsection, they will be i-isomorphic.
The notion of i-isomorphism give rise to a category of interpretations modulo i-isomorphism.
We call this category ${\sf INT}_1$. Isomorphism in ${\sf INT}_1$ is {\em bi-interpretability}.

  \subsubsection{Isomorphism}
  
  Our third notion of sameness of the basic list is  that $K$ and $K'$ are the same if,
  for all
  models $\mathcal M$ of $V$, the internal models
  $\modf{K}({\mathcal M})$ and $\modf{K'}({\mathcal M})$ are isomorphic.
  We will  simply say that $K$ and $K'$ are isomorphic.
  Clearly, i-isomorphism implies isomorphism. We call the associated
  category ${\sf INT}_2$. Isomorphism in ${\sf INT}_2$ is {\em iso-congruence}.
     
  \subsubsection{Elementary Equivalence}
  
   The fourth notion is to say that two interpretations $K$ and $K'$ are the same if, for each $\mathcal M$,
   the internal models $\modf{K}({\mathcal M})$ and $\modf{K'}({\mathcal M})$ are elementarily equivalent.
   We will say that $K$ and $K'$ are elementarily equivalent.
   By the Completeness Theorem, this notion can be
   alternatively defined by saying that $K$ is the same as $K'$ iff, 
   for all $U$-sentences $A$, we have $V\vdash A^K\iff A^{K'}$.
   
     We call the associated
  category ${\sf INT}_3$. Isomorphism in ${\sf INT}_3$ is {\em elementary congruence}
  or {\em sentential congruence}.

      \subsubsection{Identity of Source and Target}
    
    Finally, we have the option of abstracting away from the specific identity
    of interpretations completely, simply counting any two interpretations $K,K':U\to V$
    the same.  The associated category is ${\sf INT}_4$. This is simply the
    structure of degrees of one-dimensional interpretability.
    Isomorphism in ${\sf INT}_4$ is {\em mutual interpretability}.
    
  \subsection{The Many-sorted Case}
    
   Interpretability can be extended to interpretability between many-sorted theories. However to do
   that properly, we would need to develop the notion of piecewise interpretation. Since this notion
   is not needed in the present paper, we just describe interpretations of many-sorted theories
   in one-sorted theories. These are precisely what one would expect: the interpretation $K$ does not
   specify just one domain, but, for each sort $\mathfrak a$, a domain $\updelta_{\mathfrak a}$. We allow
   a different dimension for each sort. The translation
   of a quantifier $\forall x^{\mathfrak a}$ is $\forall \vec x\,(\updelta_{\mathfrak a}(\vec x\,) \to {\ldots})$. We translate
   a predicate $P$ of type ${\mathfrak a}_0,\ldots,{\mathfrak a}_{n-1}$ to a 
   formula $A(\vec x_0,\ldots \vec x_{n-1})$, where the target theory verifies, for $i<n$, the formula
   $A(\vec x_0,\ldots \vec x_{n-1}) \to \updelta_{{\mathfrak a}_i}(\vec x_i)$.\footnote{Note that 
   the sequence $\vec x_i$ has as length the dimension associated to the sort $\mathfrak a_i$.}

   We will consider theories with a designated sort $\mathfrak o$ of objects. An interpretation of
   such a theory into a one-sorted theory is \emph{$\mathfrak o$-direct} iff it is one-dimensional for sort
   $\mathfrak o$, and has $\updelta_{\mathfrak o}(x) := (x=x)$ and translates identity on $\mathfrak o$
   to identity simpliciter. In other words, the interpretation is direct when we restrict our attention to the
   single sort $\mathfrak o$.
   
   \subsection{Parameters}
   
   We can extend our notion of interpretation to {\em interpretation with parameters} as follows.
   Say our interpretation is $K:U\to V$.
   In the target theory, we have a parameter domain $\alpha(\vec z\,)$, which is $V$-provably non-empty.
   The definition of interpretation remains the same but for the fact that the parameters $\vec z$.
   Our condition for $K$ to be an interpretation becomes: 
  \[U\vdash A \;\; \To \;\;  V\vdash \forall \vec z\;(\alpha(\vec z\,) \to A^{K,\vec z}\,).\]
  
  \noindent We note that an interpretation $K:U\to V$ with parameters provides a parametrized
{\em set}  of inner models of $U$ inside a model of $V$. 
    
    \section{Sequentiality and Conceptuality}\label{sequentiality}

We are interested in theories {\em with coding}. There are several `degrees' of coding, like pairing, sequences,
etcetera. We want a notion that allows us to build arbitrary sequences of all objects of our domain. The relevant
notion is {\em sequentiality}. We also define a wider notion  {\em conceptuality}. This last notion is proof-generated: it gives
us the most natural class of theories for which our proof works. All sequential theories are conceptual, but not vice versa.

We have a simple and elegant definition of sequentiality. 
A theory $U$ is {\em sequential} iff it directly interprets {\em adjunctive set theory}
{\sf AS}. Here {\sf AS} is the following theory in the language with only one binary relation
symbol.
\begin{enumerate}[{\sf AS}1.]
\item
$\vdash \exists x\,\forall y\;\, y\not\in x$,
\item
$\vdash \forall x,y \,\exists z\, \forall u\,(u\in z \iff (u\in x \vee u=y))$.
\end{enumerate}

\noindent 
So the basic idea is that we can define a predicate $\in^\star$ in $U$ such that
$\in^\star$ satisfies a very weak set theory involving {\em all} the objects of $U$.
Given this weak set theory, we can develop a theory of sequences for all the objects
in $U$, which again gives us partial truth-predicates, etc. In short, the notion of sequentiality
explicates the idea of a {\em theory with coding}.

\begin{remark}
To develop the notion of sequentiality in a proper way for many-sorted theories we would
need the idea of a {\em piecewise} interpretation. We do not develop the idea of
piecewise interpretation here. Fortunately, one can forget the framework and give the
definition in a theory-free way. It looks like this.
Let $U$ be a theory with sorts $\mathcal S$. The theory $U$ is sequential when we can define, for
each $\mathfrak a,\mathfrak b\in \mathcal S$, a
binary predicate $\in^{\mathfrak{ab}}$ of type $\mathfrak{ab}$ such that:
\begin{enumerate}[a.]
\item
$U \vdash \bigvee_{\mathfrak a\in \mathcal S}\exists x^{\mathfrak a}\,
\bigwedge_{\mathfrak b \in \mathcal S}\forall y^{\mathfrak b}\, y^{\mathfrak b}\not\in^{\mathfrak{ba}} x^{\mathfrak a}$,
\item
$U \vdash \bigwedge_{\mathfrak{a,b} \in \mathcal S}\forall x^{\mathfrak a},y^{\mathfrak b} \,
\bigvee_{\mathfrak c \in \mathcal S}\exists z^{\mathfrak c}\,\bigwedge_{\mathfrak d \in \mathcal S} \forall u^{\mathfrak d}\,
(u^{\mathfrak d}\in^{\mathfrak{dc}} z^{\mathfrak c} \iff (u^{\mathfrak d}\in^{\mathfrak {da}}
 x^{\mathfrak a} \vee u^{\mathfrak d}=^{\mathfrak{db}}y^{\mathfrak b}))$.
\end{enumerate}
Here `$=^{\mathfrak{db}}$' is not really in the language if $\mathfrak d \neq \mathfrak b$. In this case we read 
$u^{\mathfrak d}=^{\mathfrak{db}}y^{\mathfrak b}$ simply as $\bot$.

It's a nice exercise to show that e.g. ${\sf ACA}_0$
and {\sf GB} are sequential. 
 \end{remark}

\noindent
Closely related to {\sf AS} is
 {\em adjunctive class theory} 
 {\sf ac}. We define this theory  as follows.
The theory {\sf ac}  is two-sorted with sorts $\mathfrak o$ (of objects) and $\mathfrak
c$ (of classes). We have identity for every sort and one relation symbol $\in$
between objects and classes, i.e. of type $\mathfrak{oc}$. 
We let $x,y,\ldots$ range over objects and $X,Y,\ldots$ range over classes.
 We have the following axioms
\begin{enumerate}[{\sf ac}1.]
\item
$\vdash \exists X\,\forall x\;\, x \not \in X$,
\item
$\vdash \forall Y,y\,\exists X\;\forall x\;(x\in X \iff (x\in Y \vee x =y))$,
\item
$\vdash X= Y \iff \forall z\;(z\in X \iff z\in Y)$.
\end{enumerate}

\noindent 
Note that extensionality is cheap since we could treat identity on
classes as {\em defined} by the relation of extensional sameness.
The theory {\sf ac} is much weaker than {\sf AS}, since it admits finite
models.
The following theorem is easy to see.

\begin{theorem}
A theory $U$ is sequential iff there is an $\mathfrak o$-direct interpretation of
{\sf ac} in $U$ that is one-dimensional in the interpretation of classes.
\end{theorem}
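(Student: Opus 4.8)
The plan is to prove the two directions by composing with suitable interpretations relating {\sf AS} and {\sf ac}.

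\emph{From sequentiality to the $\mathfrak o$-direct interpretation of {\sf ac}.} Suppose $U$ is sequential, so there is a direct interpretation $K\colon{\sf AS}\to U$. First I would exhibit an $\mathfrak o$-direct interpretation $N_0\colon{\sf ac}\to{\sf AS}$ that is one-dimensional in the interpretation of classes, and then set $N := N_0\circ K$. The interpretation $N_0$ reuses the single domain of {\sf AS} for both sorts: objects \emph{and} classes are the objects of {\sf AS}, the type-$\mathfrak{oc}$ relation $\in$ is translated to the membership relation of {\sf AS}, identity on $\mathfrak o$ goes to identity, and identity on $\mathfrak c$ goes to extensional sameness $X\approx Y := \forall z\,(z\in X\iff z\in Y)$. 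Checking that $N_0$ is an interpretation is routine: {\sf ac}1 and {\sf ac}2 translate to exactly {\sf AS}1 and {\sf AS}2, {\sf ac}3 translates to a tautology because $\approx$ was \emph{defined} to be extensional sameness, and the identity axioms for the sort $\mathfrak c$ hold because $\approx$ is visibly an equivalence relation and a congruence for $\in$ (and identity on $\mathfrak o$ is genuine identity). Since being $\mathfrak o$-direct and being one-dimensional in classes are preserved by composition — the $\mathfrak o$-domain of $N_0$ is $x=x$ and $K$ is unrelativized and identity-preserving, while $1\times 1=1$ — the composite $N=N_0\circ K$ is as required.

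\emph{From the $\mathfrak o$-direct interpretation of {\sf ac} back to sequentiality.} Suppose $N\colon{\sf ac}\to U$ is $\mathfrak o$-direct and one-dimensional in the interpretation of classes. Write $\updelta(Y)$ for the $U$-formula that $N$ uses as the domain of classes and $E(x,Y)$ for the $U$-formula translating $\in$. Because $N$ is an interpretation, $U$ proves: (i) $E(x,Y)\to\updelta(Y)$, the type constraint on $\in$; (ii) $\exists Y\,(\updelta(Y)\wedge\forall x\,\lnot E(x,Y))$, the translation of {\sf ac}1; and (iii) $\forall Y,y\,(\updelta(Y)\to\exists Z\,(\updelta(Z)\wedge\forall u\,(E(u,Z)\iff(E(u,Y)\vee u=y))))$, the translation of {\sf ac}2. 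I would then take $K\colon{\sf AS}\to U$ to be the unrelativized interpretation whose domain is all of $U$, which translates $\in$ to $E$ and identity to identity; this is visibly direct. It remains to verify that $U$ proves the translations of {\sf AS}1 and {\sf AS}2. The translation of {\sf AS}1 is $\exists x\,\forall y\,\lnot E(y,x)$, witnessed by the empty class given by (ii). The translation of {\sf AS}2 is $\forall x,y\,\exists z\,\forall u\,(E(u,z)\iff(E(u,x)\vee u=y))$: given $x,y$, if $\updelta(x)$ holds apply (iii) with $Y:=x$; if $\lnot\updelta(x)$, then (i) gives $\forall u\,\lnot E(u,x)$, so applying (iii) with $Y$ the empty class from (ii) produces a $z$ with $\forall u\,(E(u,z)\iff u=y)$, which is exactly what is needed. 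Either way the required $z$ exists.

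The only point that is not pure bookkeeping — and the thing to be careful about — is this last step: we must pass from a \emph{relativized} notion of class (domain $\updelta$) to an \emph{unrelativized} interpretation of {\sf AS}, so the objects of $U$ failing $\updelta$ have to be admitted into the domain of the interpreted {\sf AS}. The type constraint (i) is precisely what makes them harmless: they have empty $E$-extension and hence behave like $\emptyset$, and the case split on $\updelta(x)$ in the verification of {\sf AS}2 is what keeps adjunction available for such objects. Everything else is just translating the two axioms of {\sf AS} and the three axioms of {\sf ac} back and forth and checking that composition preserves $\mathfrak o$-directness and one-dimensionality in the class sort.
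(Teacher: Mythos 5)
Your proof is correct, and since the paper states this theorem without proof (``easy to see''), your argument supplies exactly the routine verification intended: reusing the single {\sf AS} domain for both sorts with extensional sameness as class identity in one direction, and in the other direction the case split on the class domain $\updelta$, using the empty class and the type constraint $E(x,Y)\to\updelta(Y)$ to handle objects outside $\updelta$. The only cosmetic point is that under the paper's composition convention the composite interpretation would be written $K\circ N_0:{\sf ac}\to U$ rather than $N_0\circ K$.
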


\noindent
A theory $U$ is {\em conceptual} iff there is an $\mathfrak o$-direct interpretation of {\sf ac} in $U$.
We note that there are conceptual theories that are not sequential. For example, sequential theories always have
infinite domain, but there are conceptual theories with finite models. 
Note also that {\sf AS} is sequential, but  {\sf ac} is {\em not}  conceptual (not even in the appropriate
many-sorted formulation).

For more information on sequentiality and conceptuality, see Appendix~\ref{seqplus}.

\section{The Schr\"oder-Bernstein Theorem}\label{schrbe}

We start with a brief story of the genesis of our version of the Schr\"oder-Bernstein theorem. The first step was taken by
Harvey Friedman who saw that sequential theories should satisfy a
version of the Schr\"oder-Bernstein Theorem\footnote{In this paper we use the version of  the Schr\"oder-Bernstein Theorem
that is formulated in terms of injections. We note that the theorem really should be called: the
Dedekind-Cantor-Schr\"oder-Bernstein Theorem.}, which would lead to the desired result on the coincidence of synonymy and bi-interpretability.\footnote{Harvey
Friedman announced the result in an e-mail of January 1, 2009 starting with the words: \emph{I am guessing that you will not believe in this.}}
Albert Visser
subsequently wrote down a proof, discovering that one needs even less than sequentiality: the thing to use is
adjunctive class theory. Allan van Hulst verified Visser's version of the proof in 
Mizar as part of his master's project under
Freek Wiedijk in 2009.
After hearing a presentation by Allan van Hulst, Tonny Hurkens found a
simplification of the proof. Hurkens proof is shorter and conceptually simpler.
In our presentation here we include Hurkens' simplification. We thank Tonny for his gracious
permission to do so.

We work in the theory {\sf SB} which is  {\sf ac} extended with
two unary predicates on objects: {\sf A} and  {\sf B} and four binary predicates
on objects: $\eq{A}$, $\eq{B}$, {\sf F}, {\sf G}, plus axioms expressing 
that $\eq{A}$ is an equivalence relation on {\sf A}, 
$\eq{B}$ is an equivalence relation on {\sf B}, {\sf F} is an injection from
$\quot{A}$ to $\quot{B}$, {\sf G} is an injection from
$\quot{B}$ to $\quot{A}$. 
We construct a formula {\sf H} that {\sf SB}-provably defines a
bijection between $\quot{A}$ and $\quot{B}$.

We will employ the usual notations like: $\emptyset$, $\verz{x_0,\ldots,x_{n-1}}$,
$\subseteq$.

Our definition of what it means that {\sf F} is a function includes: if
$x\eq{A}x'{\sf F}y' \eq{B} y$, then $x{\sf F}y$. Similarly for {\sf G}.
We will treat {\sf A} as a virtual class and write $x\in{\sf A}$, etc.
\begin{itemize}
\item
A pair of classes $(X,Y)$ is {\em  downwards closed}  if, 
\begin{enumerate}[1.]
\item
$X\subseteq {\sf A}$ and $Y \subseteq {\sf B}$,
\item
 if  $v\mathop{\sf G}u $ and $u\in X$, then
there is a  $v'\in Y$ such that $v'\mathop{\sf G} u $, 
\item
if  $u\mathop{\sf F}v$ and $v\in Y$, then
there is a $u' \in X$ such that $u' \mathop{\sf F} v$.
\end{enumerate}
\item 
We say that $(X,Y)$ is an {\em $x$-switch} if 
\begin{enumerate}[i.]
 \item
   $(X, Y)$ is downwards closed,
\item  $x$ is a member of $X$,
\item
 each member of $X$ is in the range of {\sf G}.
 \end{enumerate}
\item
$x\mathop{\sf H}y$ iff (there is no $x$-switch
and $x\mathop{\sf F}y$) or (there is an $x$-switch and $y\mathop{\sf G}x$).
\end{itemize}

\begin{lemma}[{\sf SB}]
{\sf H} is a function from $\quot{A}$ to $\quot{B}$. 
\end{lemma}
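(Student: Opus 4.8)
The plan is to verify the two properties that make $\sf H$ a function from $\quot{A}$ to $\quot{B}$: first, that $x \mathop{\sf H} y$ only when $x \in {\sf A}$ and $y \in {\sf B}$ (type-correctness), and second, that for every $x \in {\sf A}$ there is exactly one $\eq{B}$-class $[y]$ with $x \mathop{\sf H} y$ (existence and uniqueness up to the equivalence relations). Type-correctness is immediate: in the first disjunct $x \mathop{\sf F} y$ forces $x \in \quot{A}$ and $y \in \quot{B}$ because $\sf F$ is an injection from $\quot{A}$ to $\quot{B}$; in the second disjunct $y \mathop{\sf G} x$ forces $y \in \quot{B}$ and $x \in \quot{A}$ because $\sf G$ is an injection from $\quot{B}$ to $\quot{A}$. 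So the real content is the existence-and-uniqueness statement, which I would prove by a case split on whether an $x$-switch exists.

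Fix $x \in {\sf A}$. In the case where there is \emph{no} $x$-switch, I need: there is a $y$ with $x\mathop{\sf F}y$, and this $y$ is unique up to $\eq{B}$. Existence holds because $\sf F$ is a (total) injection on $\quot{A}$, so $x$ has an $\sf F$-image; uniqueness up to $\eq{B}$ is exactly the functionality clause built into the definition of ``$\sf F$ is a function'' (if $x \eq{A} x \mathop{\sf F} y_1$ and $x \eq{A} x \mathop{\sf F} y_2$ then, $\sf F$ being injective on classes, $y_1 \eq{B} y_2$). Conversely, in the case where there \emph{is} an $x$-switch, I need: there is a $y$ with $y \mathop{\sf G} x$, and it is unique up to $\eq{B}$. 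The uniqueness is again the functionality/injectivity of $\sf G$ (viewed as a partial function $\quot{B} \to \quot{A}$: from $y_1 \mathop{\sf G} x$ and $y_2 \mathop{\sf G} x$ we get $y_1 \eq{B} y_2$). For existence I must show $x$ is in the range of $\sf G$ — but this is delivered by clause iii in the definition of $x$-switch: every member of $X$, and in particular $x$ itself (clause ii), lies in the range of $\sf G$, so some $y$ with $y \mathop{\sf G} x$ exists.

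Finally I would check that the two cases are genuinely exhaustive and exclusive — which they are by construction, since ``there is an $x$-switch'' is a definite property of $x$ — and that the formula $\sf H$ respects the equivalence relations, i.e. if $x \eq{A} x'$ then $x$ has an $x$-switch iff $x'$ does, and $x \mathop{\sf H} y \eq{B} y'$ implies $x \mathop{\sf H} y'$, and $x' \eq{A} x \mathop{\sf H} y$ implies $x' \mathop{\sf H} y$. The first of these reduces to the observation that an $x$-switch $(X,Y)$ is also an $x'$-switch when $x \eq{A} x'$: downward closure is phrased entirely in terms of $\sf F$, $\sf G$ and membership, all of which are $\eq{A}$/$\eq{B}$-invariant by hypothesis, and $x' \in X$ follows from $x \in X$ together with the fact that $X \subseteq {\sf A}$ is (implicitly) a union of $\eq{A}$-classes — or, more carefully, one notes that $\sf H$ is only ever asked about $\eq{A}$-classes and the whole construction can be read at the level of $\quot{A}$ and $\quot{B}$. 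The step I expect to be the only mildly delicate point is this bookkeeping about invariance under $\eq{A}$ and $\eq{B}$; the existence and uniqueness claims themselves are short once one unwinds that $\sf F$ and $\sf G$ are injections between the quotients and that clause iii of the switch definition is precisely what guarantees surjectivity of $\sf G$ onto the relevant $x$.
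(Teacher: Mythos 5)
Your overall decomposition (type-correctness, then existence and uniqueness by a case split on whether an $x$-switch exists, then invariance under $\eq{A}$ and $\eq{B}$) is the same as the paper's, and the existence/uniqueness parts are handled exactly as there: clause iii of the switch definition supplies a {\sf G}-preimage of $x$, while functionality of {\sf F}, respectively injectivity of {\sf G}, gives uniqueness up to $\eq{B}$. The gap is in the one step you yourself flag as delicate: the claim that ``there is an $x$-switch'' is $\eq{A}$-invariant. You justify it by asserting that an $x$-switch $(X,Y)$ is already an $x'$-switch because ``$X$ is (implicitly) a union of $\eq{A}$-classes''. That is not so: nothing in the definition of downward closure or of a switch forces $X$ to be closed under $\eq{A}$, and in {\sf SB} the classes satisfy only the empty-class and adjunction axioms, so you cannot pass from $X$ to its $\eq{A}$-closure (there is no comprehension). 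Your fallback, that ``the whole construction can be read at the level of $\quot{A}$ and $\quot{B}$'', is likewise unavailable: the classes of {\sf ac} are classes of objects, clause ii literally asks for membership of the object $x$ in $X$, and there is no machinery for forming classes of equivalence classes.

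The repair is short and is what the paper does: given $x\eq{A}x'$ and an $x$-switch $(X,Y)$, use adjunction to form $X\cup\verz{x'}$ and check that $(X\cup\verz{x'},Y)$ is an $x'$-switch. The only new checks concern $x'$: if $v\mathop{\sf G}x'$ then, since {\sf G} respects the equivalence relations, $v\mathop{\sf G}x$, and the witness $v'\in Y$ with $v'\mathop{\sf G}x$ provided by downward closure of $(X,Y)$ also satisfies $v'\mathop{\sf G}x'$; similarly, the {\sf G}-preimage of $x$ guaranteed by clause iii is also a {\sf G}-preimage of $x'$, so every member of $X\cup\verz{x'}$ is in the range of {\sf G}. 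With this fix, and with the built-in closure of {\sf F} and {\sf G} under $\eq{A}$/$\eq{B}$ taking care of the $y$-side (which you state correctly), your argument goes through; a cosmetic point is that in the no-switch case the uniqueness of the image uses the \emph{functionality} of {\sf F} on the quotients, not its injectivity.
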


\begin{proof}
We prove that {\sf H} preserves equivalences.
Suppose $x\eq{A} x'$, $y\eq{B} y'$ and $x\mathop{\sf H}y$. 
Suppose there is an $x$-switch $(X,Y)$. It is easy to see that
$(X\cup\verz{x'},Y)$ is an $x'$-switch. It is now immediate that
$x'\mathop{\sf H}y'$. Similarly, if we are given an $x'$-switch, 
we may conclude that there
is an $x$-switch. The remaining case where there is neither
an $x$-switch nor an $x'$-switch is again immediate.

We prove that {\sf H} is functional.
Suppose $x\mathop{\sf H}y$ and $x\mathop{\sf H}y'$.
If there is an $x$-switch, we have $y\mathop{\sf G}x$ and $y'\mathop{\sf G}x$.
So we are done by the injectivity of {\sf G}.
If there is no $x$-switch, we have $x\mathop{\sf F}y$ and $x\mathop{\sf F}y'$.
So we are done by the functionality of {\sf F}.

We prove that {\sf H} is total on {\sf A}.
 If there is no $x$-switch, we are done.
If there is an $x$-switch then $x$ is in the range of {\sf G}, and we are again done.
\end{proof}

\begin{lemma}[{\sf SB}]
 {\sf H} is injective from $\quot{A}$ to $\quot{B}$.
 \end{lemma}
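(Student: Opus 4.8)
The plan is to unwind injectivity to the statement: if $x\mathop{\sf H}y$ and $x'\mathop{\sf H}y$, then $x\eq{A}x'$. This suffices because the previous lemma already gives that ${\sf H}$ respects the equivalence relations, so two ${\sf H}$-images with ${\eq{B}}$-equivalent outputs may be taken to have a literally equal output. I would then split into four cases according to whether or not there is an $x$-switch and whether or not there is an $x'$-switch. If there is neither, the definition of ${\sf H}$ gives $x\mathop{\sf F}y$ and $x'\mathop{\sf F}y$, whence $x\eq{A}x'$ by injectivity of ${\sf F}$. If there is an $x$-switch and an $x'$-switch, then $y\mathop{\sf G}x$ and $y\mathop{\sf G}x'$, whence $x\eq{A}x'$ by functionality of ${\sf G}$. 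What remains — and this is the only real point — is to rule out the two mixed cases; by the symmetry of $x$ and $x'$ it is enough to treat the case where there is an $x$-switch $(X,Y)$ but no $x'$-switch, so that $y\mathop{\sf G}x$ and $x'\mathop{\sf F}y$.

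For the mixed case the idea is to fabricate an $x'$-switch out of $(X,Y)$ and thereby reach a contradiction. First, since $x\in X$ and $(X,Y)$ is downwards closed, clause~2 of downward closedness applied to $y\mathop{\sf G}x$ produces a $v'\in Y$ with $v'\mathop{\sf G}x$; injectivity of ${\sf G}$ then gives $v'\eq{B}y$, and since $x'\mathop{\sf F}y$ and ${\sf F}$ respects equivalences we obtain $x'\mathop{\sf F}v'$. Now clause~3 of downward closedness, applied to $x'\mathop{\sf F}v'$ with $v'\in Y$, yields a $u'\in X$ with $u'\mathop{\sf F}v'$, and injectivity of ${\sf F}$ gives $x'\eq{A}u'$. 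Thus $x'$ is $\eq{A}$-equivalent to a member of $X$; in particular $x'\in{\sf A}$, and $x'$ lies in the range of ${\sf G}$, since the range of ${\sf G}$ is closed under $\eq{A}$.

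Finally I would verify that $(X\cup\verz{x'},Y)$ is an $x'$-switch, contradicting the hypothesis. It visibly contains $x'$, and all of its members lie in the range of ${\sf G}$: those in $X$ by the $x$-switch property of $(X,Y)$, and $x'$ by the previous paragraph. For downward closedness, clauses~1 and~3 are inherited verbatim from $(X,Y)$; for clause~2, given $w\mathop{\sf G}u$ with $u\in X\cup\verz{x'}$, the subcase $u\in X$ is inherited, while if $u=x'$ we pass to $w\mathop{\sf G}u'$ via $x'\eq{A}u'$, apply clause~2 for $(X,Y)$ at $u'\in X$ to get $w'\in Y$ with $w'\mathop{\sf G}u'$, and convert back to $w'\mathop{\sf G}x'$ via $u'\eq{A}x'$. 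The main obstacle is really only the case split together with the discipline of invoking the functionality and injectivity of ${\sf F}$ and of ${\sf G}$ on the correct ($\eq{A}$ or $\eq{B}$) side at each step; beyond this, nothing is used except the downward-closure clauses, so the whole argument stays inside {\sf SB}.
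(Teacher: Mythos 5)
Your proof is correct and follows essentially the same route as the paper's: in the mixed case you use the downward-closure clauses together with the injectivity of {\sf F} and {\sf G} to show that the switch-less element is $\eq{A}$-equivalent to a member of the given switch, and then adjoin it to manufacture a forbidden switch, yielding a contradiction. The only differences are cosmetic: the paper treats the symmetric mixed case (no $x$-switch, an $x'$-switch $(X',Y')$, adjoining $x$ to $X'$) and compresses the equivalence-chasing that you spell out in full.
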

 
 \begin{proof}
Suppose $x\mathop{\sf H}y$ and $x'\mathop{\sf H}y$. If in both cases the same clauses
in the definition of {\sf H} are active, we are easily done.

Suppose there is no $x$-switch and there is an $x'$-switch, say $(X',Y')$.
By the definition of {\sf H}, we have $x\mathop{\sf F}y\mathop{\sf G}x'$. 
It follows that  $x\eq{A} x''$, for some $x''$ in $X'$.
Hence, $(X'\cup\verz{x},Y')$ is an $x$-switch. A contradiction.
\end{proof}

\begin{lemma}[{\sf SB}]
{\sf H} is surjective.
\end{lemma}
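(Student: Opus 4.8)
The plan is to prove, for an arbitrary $y\in{\sf B}$, that $y$ is in the range of {\sf H}. Since {\sf G} is in particular total on {\sf B}, fix $x$ with $y\mathop{\sf G}x$; note $x\in{\sf A}$. The proof then splits on whether there is an $x$-switch.

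If there is an $x$-switch, then the third clause of the definition of {\sf H} applies directly: from the existence of an $x$-switch together with $y\mathop{\sf G}x$ we get $x\mathop{\sf H}y$, and we are done. So assume from now on that there is no $x$-switch; I will instead produce a witness via the \emph{first} clause.

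First I would observe that $y$ must be in the range of {\sf F}. Indeed, if it were not, then $(\verz x,\verz y)$ would already be an $x$-switch: the first downward-closure clause and conditions ii and iii of being a switch are trivial (using $y\mathop{\sf G}x$ for the last), the second downward-closure clause holds because $y\mathop{\sf G}x$ and $y\in\verz y$, and the third holds vacuously precisely because nothing is {\sf F}-related to $y$. This contradicts the case assumption. Hence we may fix $x'$ with $x'\mathop{\sf F}y$, and it suffices to show there is no $x'$-switch; for then $x'\mathop{\sf F}y$ and the absence of an $x'$-switch give $x'\mathop{\sf H}y$ by the first clause. The crucial point is an enlargement move: if $(X',Y')$ were an $x'$-switch, then $(X'\cup\verz x,\,Y'\cup\verz y)$ would be an $x$-switch. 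Downward closure of the enlargement reduces, on the new members, to the facts $y\mathop{\sf G}x$ and (for the {\sf F}-clause at $y$) $x'\mathop{\sf F}y$ with $x'\in X'$, using injectivity of {\sf F} to identify the {\sf F}-predecessor of $y$ with $x'$ up to $\eq{A}$; and every member of $X'\cup\verz x$ is in the range of {\sf G}, since this holds for the members of $X'$ by condition iii and for $x$ by $y\mathop{\sf G}x$. Again this contradicts the case assumption, so no $x'$-switch exists, and we are done.

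I do not anticipate a real obstacle here; the argument is short, and the two contradictions are of the same flavour as the enlargement arguments already used in the proofs that {\sf H} is a function and is injective. The only things to watch are (i) that every class invoked — $\verz x$, $\verz y$, and the one-point enlargements $X'\cup\verz x$ and $Y'\cup\verz y$ — is obtained by adjunction from a class already available, so that it exists in {\sf SB} (this is exactly why the whole development lives comfortably in {\sf ac}); and (ii) that each manipulation of {\sf F} and {\sf G} respects $\eq{A}$ and $\eq{B}$ and uses injectivity only in the form ``{\sf F}-predecessors, resp.\ {\sf G}-predecessors, are unique up to the relevant equivalence''.
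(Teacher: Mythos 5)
Your proof is correct and is essentially the paper's own argument: the two constructions you use --- the singleton switch $(\verz{x},\verz{y})$ and the one-point enlargement $(X'\cup\verz{x},\,Y'\cup\verz{y})$ --- are exactly the switches the paper exhibits, you merely organise the case split around the existence of an $x$-switch for the {\sf G}-image of $y$ and run those constructions as reductio steps rather than as direct witnesses. (One small remark: the appeal to injectivity of {\sf F} in the enlargement is superfluous, since the downward-closure clause at $y$ only asks for \emph{some} member of the first class that is {\sf F}-related to $y$, and $x'\in X'$ already supplies one.)
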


\begin{proof} 
Consider any $y\in {\sf B}$. 
First, suppose  $y$ is not in the range of {\sf F}. Let $y\mathop{\sf G}x$.
Then, $(\verz{x},\verz{y})$  is an $x$-switch,
and we have $x\mathop{\sf H}y$. 

Next, suppose $x\mathop{\sf F}y$ and there is no $x$-switch.
In this case $x\mathop{\sf H}y$.

Finally, suppose $x\mathop{\sf F}y$ and there is an $x$-switch $(X,Y)$. Let $y\mathop{\sf G}x_1$.
Then, we have $(X\cup\verz{x_1}, Y\cup\verz{y})$ is an $x_1$-switch. Hence,
$x_1\mathop{\sf H}y$. 

Thus, in all cases $y$ is in the image of {\sf H}. 
\end{proof}

\noindent 
We have proved the following theorem.

\begin{theorem}
In {\sf SB} we can construct a function {\sf H} that is a bijection between
${\sf A}/{\sf E}_{\sf A}$ and ${\sf B}/{\sf E}_{\sf B}$.
\end{theorem}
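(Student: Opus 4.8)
The plan is to obtain the theorem as an immediate consequence of the three preceding lemmas, applied to the single formula {\sf H} constructed above. The first lemma shows that {\sf H} respects $\eq{A}$ and $\eq{B}$, is functional, and is total on {\sf A} — that is, that {\sf H} genuinely defines a function $\quot{A}\to\quot{B}$. The second lemma shows this function is injective, and the third shows it is surjective onto {\sf B}. A map that is simultaneously injective and surjective is a bijection, so no further argument is needed: one simply invokes the three lemmas. Since each lemma is proved inside {\sf SB}, the construction of {\sf H} and the verification of all its properties take place in {\sf SB}, which is exactly the assertion of the theorem.

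So the substance of the proof lies entirely in the construction of {\sf H} and in the three lemmas, and the only obstacle worth flagging is not in the present step but in those lemmas: the point of the theorem is that Schr\"oder--Bernstein survives over the very weak base {\sf ac}, rather than requiring a genuine set theory. The key device is the notion of an \emph{$x$-switch}: a downwards closed pair $(X,Y)$ that contains $x$ and lies inside the range of {\sf G}. The textbook proof of Schr\"oder--Bernstein decides membership in the bijection by tracing, through each element, the (possibly infinite) chain of alternating $\mathop{\sf F}$- and $\mathop{\sf G}$-steps and asking where that chain originates; but such chains need not be codable in {\sf ac}. The switch gadget is engineered to avoid this: the dichotomy ``there is an $x$-switch'' versus ``there is no $x$-switch'' is a bare existential over classes, and — crucially — whenever one must pass from a witness for $x$ to a witness for a neighbouring element (as in checking that {\sf H} preserves equivalences, in the injectivity argument, and in two of the cases of the surjectivity argument) it suffices to adjoin a \emph{single} point to $X$ and/or to $Y$. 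Since adjunction of one element and formation of the empty class are precisely the operations {\sf ac} supplies, every class invoked in the argument exists — and this is the hard part that the asymmetric design of {\sf H} is built to overcome.

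Finally, I would note that working with the quotients $\quot{A}$ and $\quot{B}$ rather than with {\sf A} and {\sf B} themselves costs nothing: the standing convention that {\sf F} is closed under $\eq{A}$ on the left and $\eq{B}$ on the right (and similarly for {\sf G}), together with the ``{\sf H} preserves equivalences'' clause of the first lemma, makes every switch manipulation automatically $\eq{A}$/$\eq{B}$-invariant, so the bijection {\sf H} descends to the quotients with no change to any of the arguments.
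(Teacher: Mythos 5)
Your proposal is correct and matches the paper exactly: the paper states this theorem with the words ``We have proved the following theorem,'' i.e.\ it too treats the result as the immediate conjunction of the three lemmas (functionality, injectivity, surjectivity of {\sf H}), all proved inside {\sf SB}. Your added remarks on the switch gadget and the $\eq{A}$/$\eq{B}$-invariance are accurate commentary on those lemmas rather than a different route.
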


\begin{example}
Consider a model of {\sf SB}. We write $A$ for the interpretation of {\sf A}, etc. We note that the
function $H$ constructed by the proof of the theorem depends on our choice of classes. Suppose, e.g.,
that our objects are the integers, $A$ is the set of even integers, $B$ is the set of odd integers,
our equivalence relations are identity on the given virtual class,
 $F$ is the successor function domain-restricted to $A$, and $G$ is the successor function domain-restricted to
$B$. In the case that our classes are all possible classes of numbers, the pair of the class of all even numbers  and all odd
numbers is an $a$-switch, for each even $a$. So $H = G^{-1}$, i.e. the predecessor function domain-restricted to $A$.
In the case that our classes are the finite classes, there is no $a$-switch for any even $a$. So, $H=F$. 
\end{example}

\noindent For us the following obvious corollary is relevant.

\begin{corollary}
Let $T$ be a conceptual theory. Suppose we have formulas $Ax$, $By$, $E_A$, $E_B$, $F$, $G$, where
$T$ proves that $E_A$ is an equivalence relation on $\verz{ x \mid Ax}$, that
$E_B$ is an equivalence relation on $\verz{y \mid By}$, that $F$ is an injection from
$\verz{x \mid Ax}/E_A$ to $\verz{y \mid By}/E_B$, and that
$G$ is an injection from $\verz{y \mid By}/E_B$ to $\verz{x \mid Ax}/E_A$.
Then we can find a formula $H$ that $T$-provably defines a bijection between the virtual
classes  $\verz{x \mid A x}/E_A$ and $\verz{ y \mid By}/E_B$.

Our corollary can be rephrased as follows. Suppose that $T$ is conceptual 
and we have one-dimensional interpretations $K,M:{\sf EQ}\to T$, where
{\sf EQ} is the theory of equality. Suppose further that $F:K\to M$ and $G: M \to K$ are injections. Then we can find a bijection
$H:K\to M$, and, thus, $K$ and $M$ are i-isomorphic.

We note that if $T$ is sequential, we can drop the demand that $K$ and $M$ are one-dimensional, since
every interpretation in a sequential theory is i-isomorphic with a one-dimensional interpretation. 
\end{corollary}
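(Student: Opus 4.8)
The plan is to reduce the corollary to the theorem about {\sf SB} already proved. The theorem tells us that there is a \emph{fixed formula} {\sf H}, built uniformly from the symbols of {\sf SB}, such that {\sf SB} proves {\sf H} is a bijection between $\quot{A}$ and $\quot{B}$. So the first step is purely syntactic: given the conceptual theory $T$ together with the data $Ax$, $By$, $E_A$, $E_B$, $F$, $G$, I would fix an $\mathfrak o$-direct interpretation $N$ of {\sf ac} in $T$ (which exists by the definition of conceptuality) and then observe that $N$ together with the given formulas furnishes an interpretation of the whole of {\sf SB} in $T$: the object sort of {\sf SB} goes to the (full) object domain of $T$, the class sort goes to whatever $N$ picks out, $\in$ goes to $N$'s interpretation of $\in$, and the new predicates {\sf A}, {\sf B}, $\eq{A}$, $\eq{B}$, {\sf F}, {\sf G} are interpreted by $Ax$, $By$, $E_A$, $E_B$, $F$, $G$ respectively. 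The hypotheses on $T$ are exactly what is needed to check that $T$ proves the translations of the {\sf SB}-axioms: {\sf ac}1--{\sf ac}3 hold because $N$ is an interpretation of {\sf ac}, and the remaining {\sf SB}-axioms (that $E_A$, $E_B$ are equivalence relations and $F$, $G$ are injections between the quotients, with the congruence condition built into ``is a function'') are precisely the stated hypotheses. Pulling {\sf H} back along this interpretation yields the desired formula $H$ in the language of $T$, and $T$ proves it defines a bijection between $\verz{x\mid Ax}/E_A$ and $\verz{y\mid By}/E_B$, since $T$ proves every translation of an {\sf SB}-theorem and ``{\sf H} is a bijection'' is such a theorem.

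One point to be slightly careful about: the interpretation of {\sf ac} into $T$ need not be one-dimensional on the class sort, so when we pull back {\sf H} the class-quantifiers in {\sf H} become quantifiers over $k$-tuples in $T$ (where $k$ is the dimension on classes); this is harmless, as it only makes $H$ a more complicated first-order formula of $T$, not a different kind of object. Also the object sort really is interpreted directly, so the predicates $Ax$, $By$ etc.\ plug in without any relativisation mismatch; this is why $\mathfrak o$-directness of the {\sf ac}-interpretation (rather than mere conceptuality in some weaker sense) is exactly the right hypothesis.

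For the rephrasing in terms of {\sf EQ}: a one-dimensional interpretation $K:{\sf EQ}\to T$ is nothing more than a domain formula together with the interpretation of the one relation of {\sf EQ}, namely equality; so $K$ is given by a pair $(Ax, E_A)$ with $T$ proving $E_A$ is an equivalence relation on $\verz{x\mid Ax}$ (``$K$ interprets {\sf EQ}'' says exactly that the translated equality axioms hold), and likewise $M$ is given by $(By, E_B)$. An i-isomorphism, resp.\ injection of interpretations, between such $K,M$ is by definition a $T$-formula that $T$ proves to be a bijection, resp.\ injection, between $\verz{x\mid Ax}/E_A$ and $\verz{y\mid By}/E_B$ (the quotients appear because identity in {\sf EQ} is translated to $E_A$, $E_B$, not to literal identity). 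Thus the first paragraph's conclusion is literally the statement that $H:K\to M$ is an i-isomorphism, so $K$ and $M$ are i-isomorphic. Finally, when $T$ is sequential, every interpretation into $T$ is i-isomorphic to a one-dimensional one (this is a known fact about sequential theories, to be recorded in the appendix), and i-isomorphism is an equivalence relation compatible with the hypotheses, so one may drop the one-dimensionality assumption by first replacing $K$ and $M$ with one-dimensional representatives and transporting $F$, $G$ along the i-isomorphisms.

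I do not expect a genuine obstacle here — the content was already discharged in proving the {\sf SB}-theorem, and what remains is bookkeeping. The one place that deserves care is making sure the pulled-back formula $H$ is genuinely a formula of $T$ (finitely long, using only $T$'s symbols): this is fine because {\sf H} is a single fixed formula of the finite language of {\sf SB} and interpretations send formulas to formulas.
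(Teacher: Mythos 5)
Your proposal is correct and is exactly the argument the paper has in mind: the corollary is stated there as an immediate consequence of the {\sf SB}-theorem, and the intended justification is precisely your bookkeeping step of combining the $\mathfrak o$-direct interpretation of {\sf ac} supplied by conceptuality with the formulas $A$, $B$, $E_A$, $E_B$, $F$, $G$ to obtain an interpretation of {\sf SB} in $T$, and then pulling the fixed formula {\sf H} back along it. Your side remarks (the class sort may be multi-dimensional, the object sort is direct so no relativisation mismatch occurs, and in the sequential case one first replaces $K$, $M$ by one-dimensional representatives and transports $F$, $G$) are also in line with the paper's setup.
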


\section{From Bi-interpretability to Synonymy}\label{fromto}

In this section we prove our main result. If two theories are bi-interpretable via identity-preserving interpretations, then
they are synonymous.

\begin{theorem}\label{grotesmurf}
Let $U$ and $V$ be any theories and suppose  $K:U\to V$ and $M:V\to U$.
Suppose that, for any model $\mathcal M$ of $V$, we have 
$\widetilde M \widetilde K(\mathcal M)= \mathcal M$
\textup{(}in other words, $K\circ M = {\sf id}_V$ in ${\sf INT}_0$\textup{)}. 
Suppose further that, for any model $\mathcal N$ of $U$, the model
$\widetilde K \widetilde M(\mathcal N)$ is elementarily equivalent to
$\mathcal N$ \textup(in other words, $M\circ K = {\sf id}_U$ in ${\sf INT}_3$\textup). Then $U$ and $V$ are synonymous.

Here is a different formulation: if $K,M$ witness that $V$ is an ${\sf INT}_0$-retract of $U$ and that $U$ is 
an ${\sf INT}_3$-retract of $V$, then $U$ and $V$ are synonymous.
\end{theorem}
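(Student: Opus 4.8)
The plan is to refine the hypotheses into something the Schröder–Bernstein machinery of Section~\ref{schrbe} can act on, and then to combine the resulting bijection between the two interpretations with the given retraction data to produce a genuine isomorphism in ${\sf INT}_0$. First I would observe that the hypothesis $K\circ M={\sf id}_V$ in ${\sf INT}_0$ is extremely strong: for every model $\mathcal M$ of $V$ we literally have $\widetilde M\widetilde K(\mathcal M)=\mathcal M$ on the nose, so in particular $K$ and $M$ are one-dimensional and identity preserving (the composite's domain must be the full domain and its equality the real equality, which forces this at each stage), and $M$ is ``injective on the domain'' in a strong sense — the domain of $M$ inside $\widetilde K(\mathcal M)$, pushed forward and intersected with the copy of $\mathcal M$, recovers $\mathcal M$ exactly. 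The other hypothesis only gives $M\circ K={\sf id}_U$ in ${\sf INT}_3$, i.e. elementary equivalence of $\widetilde K\widetilde M(\mathcal N)$ with $\mathcal N$; this is the weaker, asymmetric side and is where the real work lies.

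Next I would set up the Schröder–Bernstein input. Working in $U$ (which, being bi-interpretable with — in fact a retract-partner of — a theory that is presumably sequential, or more precisely conceptual, carries the needed adjunctive class theory; if sequentiality of $U$ is not literally assumed I would first argue it is inherited, since the relevant structure is preserved under the retraction from a conceptual theory), I want to compare two one-dimensional interpretations of ${\sf EQ}$, the theory of equality, into $U$. One is ${\sf id}_U$ itself, with its trivial equivalence relation. The other is the composite $M\circ K:U\to U$ restricted to just the equality structure, i.e. the virtual class $\widetilde{M\widetilde K}$-domain with the induced equality; the ${\sf INT}_3$ hypothesis is used to see this composite interpretation lands where it should. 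The identity interpretation gives an obvious injection ${\sf id}_U \to M\circ K$ in one direction (the inclusion of the domain, using that $K,M$ are identity-preserving so the composite's domain sits inside the real domain in a definable way), while the $U$-definable structure coming from $M$ applied to $K$'s domain provides an injection the other way; here I would have to be careful that ``injection between virtual classes modulo equivalence'' is exactly the hypothesis of the Corollary following the Schröder–Bernstein theorem. That Corollary then hands us a $U$-definable bijection $H$ between the domain of ${\sf id}_U$ and the domain of $M\circ K$, compatible with the equivalence relations.

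With $H$ in hand I would upgrade the ${\sf INT}_3$-retraction to an i-isomorphism and then to an equality. The bijection $H$ on domains must be promoted to an isomorphism of the full structures $\mathcal N$ and $\widetilde K\widetilde M(\mathcal N)$: this is where the elementary-equivalence hypothesis is leveraged structure-by-structure, transporting each predicate across $H$ and using elementary equivalence to check the transported predicate is $U$-provably the right one. This yields an i-isomorphism $M\circ K \to {\sf id}_U$, i.e. $M\circ K={\sf id}_U$ already in ${\sf INT}_1$. Combining with $K\circ M={\sf id}_V$ in ${\sf INT}_0\subseteq{\sf INT}_1$ gives bi-interpretability; but now both interpretations are one-dimensional and identity preserving (forced by the ${\sf INT}_0$ side as noted), and the standard argument — absorb the two definable isomorphisms into the interpretations by composing with the identity-preserving bijections, which does not change the ${\sf INT}_0$-class because identity-preserving one-dimensional isomorphisms can be eliminated — turns this into isomorphism in ${\sf INT}_0$, i.e. synonymy.

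The main obstacle I anticipate is the asymmetry: the Schröder–Bernstein step needs honest \emph{injections between virtual classes modulo equivalence relations} in both directions, and producing the ``hard'' direction — an injection from the domain of ${\sf id}_U$ into the domain of $M\circ K$ — while only having $M\circ K={\sf id}_U$ up to \emph{elementary equivalence} (not up to isomorphism) is delicate. One must exploit that the identity-preservation and one-dimensionality of $K$ and $M$ are \emph{not} assumed but \emph{derived} from the ${\sf INT}_0$-retraction $K\circ M={\sf id}_V$, and it is precisely this derived rigidity on the $V$-side that, pulled through $M$, gives enough control on the $U$-side to define the needed injection and later to transport predicates along $H$. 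Getting the bookkeeping of domains, equivalence relations, and parameters exactly aligned with the statement of the Corollary to the Schröder–Bernstein theorem is the crux; everything downstream of $H$ is routine.
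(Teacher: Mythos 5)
Your proposal misidentifies both the tools available and the place of this theorem in the paper's architecture. Theorem~\ref{grotesmurf} is stated for \emph{arbitrary} theories $U$ and $V$: there is no conceptuality or sequentiality hypothesis, so the Schr\"oder--Bernstein corollary of Section~\ref{schrbe} (which needs an $\mathfrak o$-direct interpretation of {\sf ac}) is simply not available, and your plan to ``inherit'' conceptuality has nothing to inherit from, since no conceptual theory occurs in the hypotheses. Moreover, in the paper this theorem is the elementary base on which the later results are built: the absorption of a definable isomorphism into an interpretation is the content of Theorem~\ref{brilsmurf}/Corollary~\ref{lachendesmurf}, and the Schr\"oder--Bernstein step belongs to Theorem~\ref{sterkesmurf}. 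Your closing move --- ``the standard argument turns bi-interpretability (with the derived rigidity) into synonymy'' --- is therefore circular: it appeals to exactly what is proved \emph{from} Theorem~\ref{grotesmurf}, and it cannot be ``standard'' in any case, since bi-interpretability does not imply synonymy in general (Section~\ref{counter}). There is also a genuine mathematical gap at the step where you promote the Schr\"oder--Bernstein bijection $H$ of domains to an isomorphism of structures ``using elementary equivalence structure-by-structure'': elementary equivalence of $\mathcal N$ and $\widetilde K\widetilde M(\mathcal N)$ gives no control over whether a \emph{given} definable bijection transports the predicates; elementarily equivalent structures need not be isomorphic at all, let alone via a prescribed map. (A smaller inaccuracy: the ${\sf INT}_0$ hypothesis constrains only the composite $K\circ M$; it does not force $K$ and $M$ \emph{individually} to be identity preserving or unrelativized --- e.g.\ $M$ may translate identity to a nontrivial equivalence whose $K$-translation happens to define identity on $\updelta_{K\circ M}$.)

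The intended argument is far more elementary and uses no coding at all. Given $\mathcal N\models U$, the model $\widetilde M(\mathcal N)$ is a model of $V$, so the ${\sf INT}_0$ hypothesis applied to it gives
$\widetilde K\widetilde M\widetilde K\widetilde M(\mathcal N)=\widetilde K\widetilde M(\mathcal N)$; writing $\mathcal P:=\widetilde K\widetilde M(\mathcal N)$, we have $\widetilde K\widetilde M(\mathcal P)=\mathcal P$ \emph{on the nose}. The point is that this literal equality is expressed by $U$-sentences, namely $\forall x\,\updelta_{M\circ K}(x)$ and $\forall \vec x\,(P_{M\circ K}\vec x\iff P\vec x\,)$ for the predicates $P$ of $U$; these sentences hold in $\mathcal P$, hence --- and this is the only use of the ${\sf INT}_3$ hypothesis --- in the elementarily equivalent $\mathcal N$. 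So $\widetilde K\widetilde M(\mathcal N)=\mathcal N$ for every $\mathcal N\models U$, i.e.\ $M\circ K={\sf id}_U$ in ${\sf INT}_0$, which together with $K\circ M={\sf id}_V$ in ${\sf INT}_0$ is synonymy. Your proposal, by contrast, routes the problem through machinery that is neither available under these hypotheses nor needed, and its two load-bearing steps (the upgrade along $H$, and the final passage to ${\sf INT}_0$) are the ones that do not go through.
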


\begin{proof}
Consider any model $\mathcal N$ of $U$. 
We have $\widetilde K \widetilde M\widetilde K\widetilde M(\mathcal N) = 
\widetilde K \widetilde M(\mathcal N)$. Let $\mathcal P :=\widetilde K \widetilde M(\mathcal N)$.
So,  $\widetilde K \widetilde M(\mathcal P) = \mathcal P$.
We note that the identity of $\widetilde K \widetilde M(\mathcal P)$ and $\mathcal P$
is witnessed by such statements as $\forall x\,\updelta_{M\circ K}(x)$ and $\forall \vec x\,(P_{M\circ K}\vec x \iff P\vec x\,)$.
Since $\mathcal P$ is 
elementarily equivalent to $\mathcal N$, we have $\widetilde K \widetilde M(\mathcal N) = \mathcal N$.
So $M\circ K = {\sf id}_U$ in ${\sf INT}_0$.
\end{proof}

\noindent
There is, of course, also a model-free proof of the result. 

\begin{theorem}\label{brilsmurf}
 Suppose that
$K: U \to V$ and $M:V\to U$ witness that $V$ is an ${\sf INT}_1$-retract of $U$ and that $U$ is an
${\sf INT}_3$-retract of $V$. Suppose further that 
 $M$ is direct. Then $U$ and $V$ are synonynous.
\end{theorem}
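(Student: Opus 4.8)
The plan is to reduce to Theorem~\ref{grotesmurf} by replacing $K$ with an i-isomorphic \emph{direct} interpretation $K^\ast\mathop{:}U\to V$ for which $K^\ast\circ M$ is \emph{literally} ${\sf id}_V$ in ${\sf INT}_0$. First unpack the hypotheses. That $V$ is an ${\sf INT}_1$-retract of $U$ via $K,M$ gives a $V$-formula $F$ with $V$ verifying ``$F$ is an isomorphism between ${\sf id}_V$ and $K\circ M$''; replacing $F$ by its converse if needed, we may take $F\mathop{:}{\sf id}_V\to K\circ M$, so that for every $\mathcal M\models V$ the map $F^{\mathcal M}$ is an isomorphism from $\mathcal M$ onto $\widetilde M\widetilde K(\mathcal M)$. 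Since $M$ is direct, $\widetilde M$ does not touch the underlying set: the domain of $\widetilde M\widetilde K(\mathcal M)$ is exactly that of $\widetilde K(\mathcal M)$, namely $\updelta_K^{\mathcal M}$ modulo $=_K^{\mathcal M}$ (where $=_K:=\tau_K(=)$). Hence $F^{\mathcal M}$ identifies $|\mathcal M|$ with $|\widetilde K(\mathcal M)|$, and, reading $F$ as a graph, $V$ proves that $F$ is a bijection from the whole domain onto $\updelta_K$ respecting $=_K$.

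Now define $K^\ast\mathop{:}U\to V$ to be direct, with $\updelta_{K^\ast}(x):=(x=x)$, identity translated to identity, and, for each $n$-ary predicate $P$ of $U$,
\[ \tau_{K^\ast}(P)(x_0,\dots,x_{n-1}) \;:=\; \exists \vec y_0\cdots \vec y_{n-1}\,\Big(\bigwedge_{i<n} F(x_i,\vec y_i)\wedge \tau_K(P)(\vec y_0,\dots,\vec y_{n-1})\Big), \]
where each $\vec y_i$ has the length of the dimension of $K$. Then for every $\mathcal M\models V$ one checks that $\widetilde{K^\ast}(\mathcal M)$ is just the pullback of $\widetilde K(\mathcal M)$ along the bijection $F^{\mathcal M}$, so $F^{\mathcal M}$ is a $U$-isomorphism $\widetilde{K^\ast}(\mathcal M)\cong\widetilde K(\mathcal M)$; in particular $\widetilde{K^\ast}(\mathcal M)\models U$, whence by Completeness $K^\ast$ is a genuine interpretation. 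Moreover $V$ verifies ``$F$ is an isomorphism between $K^\ast$ and $K$'' (this is essentially how $\tau_{K^\ast}$ was built, together with $V$ knowing $F$ is a bijection respecting $=_K$), so $K^\ast$ and $K$ are i-isomorphic.

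The two conditions needed for Theorem~\ref{grotesmurf} applied to $K^\ast,M$ now follow. For the ${\sf INT}_0$-side: since $M$ is direct, $\widetilde M\widetilde{K^\ast}(\mathcal M)$ has underlying set $|\mathcal M|$, and for a $V$-predicate $Q$, using $\widetilde{K^\ast}(\mathcal M)=(F^{\mathcal M})^{-1}[\widetilde K(\mathcal M)]$ together with the fact that $F^{\mathcal M}\mathop{:}\mathcal M\to\widetilde M\widetilde K(\mathcal M)$ is a $V$-isomorphism, one gets $Q^{\widetilde M\widetilde{K^\ast}(\mathcal M)}=Q^{\mathcal M}$. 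Thus $\widetilde M\widetilde{K^\ast}(\mathcal M)=\mathcal M$ for all $\mathcal M\models V$, i.e.\ $K^\ast\circ M={\sf id}_V$ in ${\sf INT}_0$. For the ${\sf INT}_3$-side: i-isomorphism is a congruence for composition and implies isomorphism, hence elementary equivalence, of interpretations; so $M\circ K^\ast$ is i-isomorphic, a fortiori elementarily equivalent, to $M\circ K$, which by hypothesis ($U$ an ${\sf INT}_3$-retract of $V$) is ${\sf id}_U$ in ${\sf INT}_3$; hence $M\circ K^\ast={\sf id}_U$ in ${\sf INT}_3$. Theorem~\ref{grotesmurf}, applied to $K^\ast$ and $M$, then gives that $U$ and $V$ are synonymous.

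The only real work is constructing $K^\ast$ and checking that $K^\ast\circ M$ is the identity on $V$ \emph{on the nose} (not merely up to i-isomorphism); this is precisely where directness of $M$ enters — it is what guarantees that composing with $M$ leaves the underlying set alone, so that the bijection $F$ can be absorbed into the translation defining $K^\ast$. The bookkeeping with the dimension of $K$ (that $F$ sends an element to a tuple, and that $\updelta_K$ and $=_K$ may be nontrivial) is routine but must be tracked carefully; note in particular that $K$ itself need not be one-dimensional, unrelativized, or identity-preserving for the argument to go through.
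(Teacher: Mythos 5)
Your proposal is correct and follows essentially the same route as the paper: replace $K$ by a direct interpretation $K^\ast$ obtained by transporting $\tau_K(P)$ along the $V$-definable isomorphism $F$ between ${\sf id}_V$ and $K\circ M$ (using directness of $M$ to see that $\updelta_{K\circ M}=\updelta_K$, so $F$ is a definable bijection between the full domain and $\updelta_K$ respecting $=_K$), check that $K^\ast\circ M={\sf id}_V$ in ${\sf INT}_0$ and that the ${\sf INT}_3$-condition transfers to $K^\ast$, and invoke Theorem~\ref{grotesmurf}. The bookkeeping is handled correctly, so there is nothing to add.
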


\begin{proof}
Since $M$ is direct, it follows that, in $V$, we have $\updelta_{K\circ M} = \updelta_K$.
We replace $K$ by a definably isomorphic direct interpretation
$K'$. Suppose $F$ is the promised isomorphism between $K\circ M$ and ${\sf id}_V$.
We take, for $P$ of arity $n$, in the signature of $U$: 
\begin{itemize}
\item 
 $P_{K'}(v_0,\ldots,v_{n-1}) :\iff  
  \exists \vec u_0\bin\updelta_K,\ldots, \exists \vec u_{n-1}\bin\updelta_K \\
   \hspace*{6cm}
 (\bigwedge_{i<n} \vec u_i \mathop{F}  v_i \wedge P_K(\vec u_0,\ldots, \vec u_{n-1}))$. 
 \end{itemize}
Clearly, we have an isomorphism $F':K\to K'$, based on the same underlying formula as $F$.
Hence $K',M$  witness that $U$ is an ${\sf INT}_3$-retract of $V$.

We note that $K'\circ M$ is direct. Suppose $R$ is an $m$-ary predicate of $V$. The theory $T$ proves: 
\begin{eqnarray*}
R_{K'\circ M} (x_0,\ldots, x_{m-1}) & \iff  & (R_M(x_0,\ldots ,x_{m-1}))^{K'} \\
& \iff & \exists \vec y_0\bin \updelta_K,\ldots,\exists \vec y_{m-1}\bin \updelta_K \\
&&
(\bigwedge_{j<m}\vec y_j \mathop{F} x_j \wedge (R_M(\vec y_0,\ldots,\vec y_{m-1}))^K) \\
& \iff & \exists \vec y_0\bin \updelta_K,\ldots,\exists \vec y_{m-1}\bin \updelta_K \\
&&  (\bigwedge_{j<m}\vec y_j \mathop F x_j  \wedge R_{K\circ M}(\vec y_0,\ldots,\vec y_{m-1})) \\
& \iff & R(x_0,\ldots,x_{m-1})
\end{eqnarray*}  
Thus, we find: $K'\circ M = {\sf id}_V$ in ${\sf INT}_0$, in other words, $V$ is an
${\sf INT}_0$-retract of $U$.
We apply Theorem~\ref{grotesmurf} to $K',M$ to obtain the desired result that $U$ and $V$
are synonymous.
\end{proof}

  %
  %
  %

\begin{figure}
\begin{tikzpicture}
\draw[thick](-1,0) circle (2.5);
\draw[thick, fill = lightgray](-1,0) circle (1.5);
\node at (-1,0.3) {$K \models U$};
\node at (-1,-0.3) {$K\circ M \models V$};
\node at (-1,-2) {${\sf id} \models V$};
\draw[thick,<->] (0.1,0) -- (1.2,0);
\node at (0.75,0.4) {$F$};
\end{tikzpicture}
\caption{Illustration of the Proof of Theorem~\ref{brilsmurf}}
\end{figure}
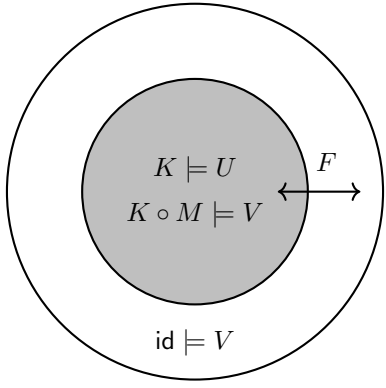

\noindent We are mainly interested in the following corollary.

\begin{corollary}\label{lachendesmurf}
Suppose $U$ and $V$ are bi-interpretable and one of the witnessing
interpretations is direct. Then $U$ and $V$ are synonynous.
\end{corollary}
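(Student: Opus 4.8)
The plan is to derive this directly from Theorem~\ref{brilsmurf}, using only that bi-interpretability is a symmetric condition and that the identification of interpretations gets coarser as one passes from ${\sf INT}_1$ to ${\sf INT}_3$.

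First I would unpack the hypothesis. Bi-interpretability of $U$ and $V$ means that there is a single pair of interpretations $K:U\to V$ and $M:V\to U$ such that $K\circ M$ is i-isomorphic to ${\sf id}_V$ and $M\circ K$ is i-isomorphic to ${\sf id}_U$. In other words, the one pair $K,M$ simultaneously witnesses that $V$ is an ${\sf INT}_1$-retract of $U$ and that $U$ is an ${\sf INT}_1$-retract of $V$. Since an i-isomorphism between two interpretations is in particular an isomorphism of all their inner models, and isomorphic models are elementarily equivalent, ${\sf INT}_1$-sameness implies ${\sf INT}_3$-sameness; hence $K,M$ also witness that $V$ is an ${\sf INT}_3$-retract of $U$ and that $U$ is an ${\sf INT}_3$-retract of $V$.

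Next I would split into two cases according to which of the two witnessing interpretations is direct. If $M:V\to U$ is direct, then $K,M$ satisfy the hypotheses of Theorem~\ref{brilsmurf} as they stand ($V$ is an ${\sf INT}_1$-retract of $U$, $U$ is an ${\sf INT}_3$-retract of $V$, and $M$ is direct), and the theorem gives that $U$ and $V$ are synonymous. If instead $K:U\to V$ is direct, I would apply Theorem~\ref{brilsmurf} to the reversed pair, letting $M$ play the role of the source-to-target interpretation $V\to U$ and $K$ play the role of the target-to-source interpretation $U\to V$; the two retract facts extracted above are precisely what the theorem then demands, now with the roles of $U$ and $V$ interchanged, and the interpretation occupying the slot required to be direct is exactly $K$. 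Theorem~\ref{brilsmurf} then yields that $V$ and $U$ are synonymous, which is the same statement since synonymy is symmetric.

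The only point that needs any care, and it is bookkeeping rather than a genuine obstacle, is the index matching in the second case: checking that, after interchanging $U$ and $V$, the ${\sf INT}_1$- and ${\sf INT}_3$-retract hypotheses of Theorem~\ref{brilsmurf} are met by the correct compositions $M\circ K$ and $K\circ M$, and that ``$M$ is direct'' in the theorem corresponds to ``$K$ is direct'' in our situation. No new construction is required beyond what is already packaged in Theorem~\ref{brilsmurf}.
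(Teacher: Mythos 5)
Your proposal is correct and follows exactly the route the paper intends: the corollary is stated as an immediate consequence of Theorem~\ref{brilsmurf}, obtained by noting that the bi-interpretation data give both ${\sf INT}_1$-retract (hence a fortiori ${\sf INT}_3$-retract) conditions and then applying the theorem, swapping the roles of $U$ and $V$ when the direct interpretation is $K$ rather than $M$. Your bookkeeping of the two cases and of the passage from ${\sf INT}_1$ to ${\sf INT}_3$ is precisely the verification the paper leaves implicit.
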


\noindent We now prove our main theorem.

\begin{theorem}\label{sterkesmurf}
Suppose  $V$ is conceptual and that
$K:U \to V$ and $M:V\to U$ form a bi-interpretation of $U$ and $V$. Let 
$K$ and $M$  both be  identity-preserving.
Then, $U$ and $V$ are synonymous.
\end{theorem}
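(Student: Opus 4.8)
The goal is to upgrade the bi-interpretation between $U$ and $V$ — where both interpretations merely preserve identity, but need not be unrelativized — to a genuine synonymy. The strategy is to reduce to the situation already handled by Corollary~\ref{lachendesmurf}: if one of the two interpretations can be replaced by an i-isomorphic \emph{direct} interpretation (unrelativized \emph{and} identity-preserving), we are done, since bi-interpretability is an ${\sf INT}_1$-notion and hence insensitive to replacing an interpretation by an i-isomorphic one. So the whole content is: \emph{using conceptuality of $V$, make the interpretation $M:V\to U$ unrelativized}, while keeping it identity-preserving and keeping the bi-interpretation intact.

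\textbf{Step 1: Set up the Schr\"oder–Bernstein situation inside $U$.}
Let $F:K\circ M\to {\sf id}_V$ and $G:M\circ K\to {\sf id}_U$ be the promised i-isomorphisms. Since $M$ is identity-preserving, its domain $\updelta_M$ carves out, in any model $\mathcal N$ of $U$, a subset that is the universe of $\modf M(\mathcal N)$, and identity on it is real identity; thus $\updelta_M$ defines in $U$ a ``virtual class'' on which equality is the honest equality — it behaves like a sorted domain with trivial equivalence relation. On the other hand, $\modf M\modf K(\mathcal N)\cong\mathcal N$ via $G$, so there is a $U$-definable injection from the full universe of $\mathcal N$ into $\updelta_{M\circ K}\subseteq\updelta_M$; composing, this gives a $U$-definable injection $G'$ from $\{x\mid x=x\}$ into $\updelta_M$. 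Trivially there is also a $U$-definable injection the other way (the inclusion $\updelta_M\hookrightarrow\{x\mid x=x\}$). Now invoke the Corollary to the Schr\"oder–Bernstein theorem (Section~\ref{schrbe}): $U$ is conceptual? — no, but $V$ is, and that is where we must do the work, so the Schr\"oder–Bernstein argument has to be run on the $V$-side. Concretely, the clean way is: the bi-interpretation lets us transport everything; work in $V$, where the composite $\modf K\modf M$ is (i-isomorphic to) the identity, so $V$ is conceptual and we have in $V$ the two virtual classes $\{x\mid x=x\}$ and (a $V$-definable copy of) $\updelta_M$, together with $V$-definable injections each way. By the Corollary to Section~\ref{schrbe}, there is a $V$-definable bijection $H$ between the full universe and that copy of $\updelta_M$.

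\textbf{Step 2: Use $H$ to rebuild $M$ as an unrelativized, identity-preserving interpretation.}
Pull the bijection $H$ back along the bi-interpretation to get, for each model $\mathcal N$ of $U$, a $U$-definable bijection between the universe of $\mathcal N$ and $\updelta_M^{\mathcal N}$, compatibly enough to be given by a single $U$-formula; call it $J$. Define a new translation $M':V\to U$ by $\updelta_{M'}(x):=(x=x)$ and, for each predicate $R$ of $V$ of arity $n$, $R_{M'}(x_0,\dots,x_{n-1}):\iff \exists u_0\bin\updelta_M\cdots\exists u_{n-1}\bin\updelta_M\,(\bigwedge_{i<n} u_i\mathop J x_i\wedge R_M(u_0,\dots,u_{n-1}))$ — exactly the ``transport along an isomorphism'' construction used in the proof of Theorem~\ref{brilsmurf}. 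Then $J$ is, by construction, a $U$-definable isomorphism $M\to M'$, so $M'$ is i-isomorphic to $M$; $M'$ is unrelativized by fiat; and $M'$ is identity-preserving since $M$ was and $J$ is an honest bijection onto $\updelta_M$ (equality on $\updelta_M$ being real equality because $M$ is identity-preserving). Hence $M'$ is direct, and replacing $M$ by $M'$ in the bi-interpretation (legitimate since we only changed within an ${\sf INT}_1$-iso class) yields a bi-interpretation of $U$ and $V$ one of whose witnesses, $M'$, is direct. Corollary~\ref{lachendesmurf} now gives synonymy.

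\textbf{The main obstacle.}
The delicate point is Step~1: making precise that the Schr\"oder–Bernstein corollary of Section~\ref{schrbe} actually applies, i.e.\ exhibiting within a single conceptual theory (the one that is conceptual, namely $V$, transported appropriately) two virtual classes with definable injections each way, \emph{and} arranging that the resulting definable bijection $H$, when pulled back, gives a $U$-formula $J$ that is simultaneously an isomorphism for \emph{all} models of $U$ — not just a model-by-model patchwork. This is where the hypothesis that $V$ (equivalently, via the bi-interpretation, the relevant composite) is conceptual is essential and must be used carefully; the bookkeeping of which theory ``sees'' which virtual class, and checking that the identity-preservation of $M$ survives the transport so that $M'$ ends up truly direct rather than merely unrelativized, is the part that needs genuine care. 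Once $M'$ is direct the rest is immediate from Corollary~\ref{lachendesmurf}.
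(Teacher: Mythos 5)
Your overall strategy is the paper's: use Schr\"oder--Bernstein in the conceptual theory to replace one witness of the bi-interpretation by an i-isomorphic \emph{direct} one, then invoke Corollary~\ref{lachendesmurf}. But your execution has a genuine gap at exactly the point you flag as ``the main obstacle''. You insist on making $M:V\to U$ direct, and since $U$ is not (yet) known to be conceptual you propose to run Schr\"oder--Bernstein in $V$ between the full domain and ``a $V$-definable copy of $\updelta_M$'', and then ``pull the bijection back'' to a single $U$-formula $J$ giving a bijection between the full $U$-domain and $\updelta_M$. Two problems. First, the $V$-definable copy of $\updelta_M$ is $\updelta_{K\circ M}$, and a $V$-definable bijection between the full domain and $\updelta_{K\circ M}$ is already supplied by the i-isomorphism $F$; no Schr\"oder--Bernstein is needed there, and it is not the bijection you need. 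Second, and decisively, the pull-back does not go through: translating a $V$-definable bijection $H$ (full domain $\to\updelta_{K\circ M}$) along $M$ yields, provably in $U$, a bijection between $\updelta_M$ and the sub-subdomain $\updelta_{M\circ K}\cap((\updelta_M)^K)^M$, and composing with the i-isomorphism $G:M\circ K\to{\sf id}_U$ only produces maps of the wrong type (for instance, a bijection of $\updelta_M$ onto itself, since $G$ carries $((\updelta_M)^K)^M$ onto $\updelta_M$), never a definable bijection from the whole $U$-domain onto $\updelta_M$. Nothing in your sketch bridges this, and ``compatibly enough to be given by a single $U$-formula'' is precisely the missing argument.

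There are two ways to repair it. The paper's way: make $K$ direct instead of $M$, so that all work stays inside the conceptual theory $V$ --- apply the Schr\"oder--Bernstein corollary in $V$ to the full domain and $\updelta_K$ (identity-preservation of $K$ makes $\updelta_K$ an honest virtual subclass), with the inclusion $\updelta_K\hookrightarrow\{x\mid x=x\}$ one way and $F$, viewed as a definable injection of the full domain into $\updelta_{K\circ M}\subseteq\updelta_K$, the other way; transport $K$ along the resulting bijection exactly as in your Step~2 (and as in Theorem~\ref{brilsmurf}) to get a direct $K'$ i-isomorphic to $K$, and finish with Corollary~\ref{lachendesmurf}. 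Alternatively, your original $U$-side setup (inclusion of $\updelta_M$ and the injection induced by $G$) works verbatim once you first justify that $U$ is itself conceptual, e.g.\ via the preservation of conceptuality to one-dimensional ${\sf INT}_1$-retracts (Appendix~\ref{seqplus}); but as written, without either move, the proof does not close.
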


\begin{proof}
We note that, in $V$, $\updelta_K$ is a (virtual) subclass of the full domain. Hence, we have a definable injection
from  $\updelta_K$ to the full domain.

Again $\updelta_{K\circ M}= \updelta^K_M \cap \updelta_K$
is a (virtual) subclass of $\updelta_K$. Moreover, we have a definable bijection $F$  between  the full domain and $\updelta_{K\circ M}$. 
Hence, we have a definable injection from the full domain into
$\updelta_K$.

We apply the Schr\"oder-Bernstein Theorem to the full domain and $\updelta_K$
providing us with a bijection $G$ between the full domain and $\updelta_K$. 
We define a new interpretation $K':U\to V$, by setting:
\begin{itemize}
\item
 $P_{K'}(v_0,\ldots,v_{n-1}) :\iff \exists w_0\bin \updelta_K,\ldots, \exists w_{n-1}\bin \updelta_K\, \\
 \hspace*{5cm} (\bigwedge_{i<n} v_i 
 \mathop G w_i \wedge P_K(w_0,\ldots,w_{n-1}))$.
 \end{itemize}
Clearly, $K'$ is direct and isomorphic to $K$, so $K'$ and $M$ form a bi-interpretation of $U$ and $V$.
By Corollary~\ref{lachendesmurf}, we may  conclude
that $U$ and $V$ are synonymous.
 \end{proof}
 
  %
    %
  %

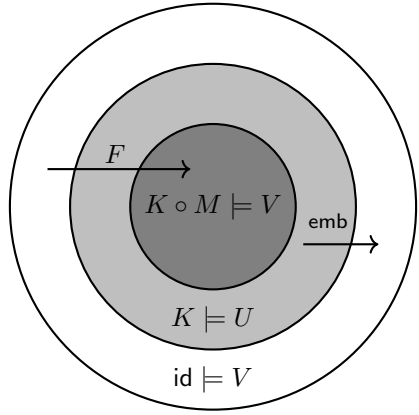
\begin{figure}
\begin{tikzpicture}
\draw[thick](-1,0) circle (2.7);
\draw[thick, fill = lightgray](-1,0) circle (1.9);
\draw[thick, fill = gray](-1,0) circle (1.1);
\node at (-1,0) {$K\circ M \models V$};
\node at (-1,-1.5) {$K\models U$};
\node at (-1,-2.3) {${\sf id}\models V$};
\draw[thick,->] (-3.2,0.5) -- (-1.3,0.5);
\draw[thick,->] (0.2,-0.5) -- (1.2,-0.5);
\node at (-2.3,0.7) {$F$};
\node at (0.53,-0.18) {\footnotesize \sf emb};
\end{tikzpicture}
\caption{Illustration of the Proof of Theorem~\ref{sterkesmurf}}
\end{figure}

\noindent We note that in the circumstances of Theorem~\ref{sterkesmurf}, it follows that $U$ is also conceptual.
 
 
  In Section~\ref{counter}, we provide an example to illustrate
that one cannot drop  the demand of identity preservation for any of the
two interpretations. We provide two finitely axiomatised sequential theories
that are bi-interpretable, but not synonymous. One of the two witnesses of the bi-interpretation
is identity preserving. 

\section{Applications}\label{applications}

In this section we provide a number of applications of Theorem~\ref{sterkesmurf}.

\subsection{Natural Numbers and Integers}

The theory ${\sf PA}^-$ is the theory of the non-negative part of a discretely ordered commutative ring,
See Richard Kaye's book \cite[Chapter 2]{kaye:mode91} and Emil  Je{\v{r}}{\'{a}}bek paper \cite{jera:sequ12}
Let {\sf DOCR} be the theory of a discretely ordered commutative rings. The theories 
${\sf PA}^-$ and {\sf DOCR} are bi-interpretable. The interpretation of  ${\sf PA}^-$ in {\sf DOCR}
is \emph{restriction to the non-negative part}. Kaye uses the well-known pairs construction as interpretation of
{\sf DOCR} in ${\sf PA}^-$. This construction is two-dimensional and employs an equivalence relation.
As shown by Je{\v{r}}{\'{a}}bek in \cite{jera:sequ12}, we have a polynomial pairing function $\tupel{\cdot,\cdot}$
in  ${\sf PA}^-$. We can define our domain as $\tupel{0,x}$, (representing the non-positive numbers)
and $\tupel{x,0}$, for all $x$, representing the non-negative numbers.\footnote{Alternatively, we can use the even and odd numbers
as domain. Note that in ${\sf PA}^-$ these need not be all the numbers. Je{\v{r}}{\'{a}}bek in \cite{jera:sequ12} shows that the odds and the evens
are mutually exclusive in ${\sf PA}^-$.} Since, ${\sf PA}^-$ has the subtraction axiom,
we can define the desired operations as a matter of course. Moreover, the verification that we defined an
 identity preserving  bi-interpretations is immediate. Thus, by Theorem~\ref{sterkesmurf}, we may conclude that  ${\sf PA}^-$ and {\sf DOCR}
 are synonymous.

\subsection{Natural Numbers and Rational Numbers}
 
Julia Robinson, in her seminal paper \cite{robi:defi49},  shows that the natural numbers are definable  in $\mathbb Q$, the field
of the rationals.
This gives us an identity preserving interpretation of $\mathsf{Th}(\mathbb N)$
in $\mathsf{Th}(\mathbb Q)$. Conversely, we can find an identity preserving interpretation
of $\mathsf{Th}(\mathbb Q)$ in $\mathsf{Th}(\mathbb N)$ by using the Cantor pairing and by just considering
 pairs $\tupel{m,n}$ where $m$ and $n$ have no common divisor except 1. Addition and
multiplication are defined in the usual way. We can easily define
internal isomorphisms witnessing that these interpretations form a
identity preserving bi-interpretation. Hence,  $\mathsf{Th}(\mathbb N)$
in $\mathsf{Th}(\mathbb Q)$ are synonymous by Theorem~\ref{sterkesmurf}.

\subsection{Finite Sets and Numbers}\label{finitesets}

We consider the theory ${\sf ZF}^+_{\sf fin}:=(\mathsf{ZF}-\mathsf{INF})+\neg\,\mathsf{INF}+\mathsf{TC}$.
This is {\sf ZF} in the usual formulation minus the axiom of infinity, plus the negation of the axiom
of infinity and the axiom {\sf TC} that tells us that every set has a transitive closure.
Kaye and Wong in their paper  \cite{kaye:inte07} provide a careful verification that
$\mathsf{ZF}^+_{\sf fin}$ and \textsf{PA}
are synonymous. By Theorem~\ref{sterkesmurf}, it is sufficient to show that the
Ackermann interpretation of ${\sf ZF}^+_{\sf fin}$ in {\sf PA} and the von Neumann interpretation of
{\sf PA} in ${\sf ZF}^+_{\sf fin}$ form a bi-interpretation. For further information about the related theory
${\sf ZF}_{\sf fin} = (\mathsf{ZF}-\mathsf{INF})+\neg\,\mathsf{INF}$, see \cite{enay:omeg10}.

\subsection{Sets with or without Urelements}

Benedikt L\"owe shows that a certain version of \textsf{ZF} with a countable set of urelements
is synonymous with \textsf{ZF}. See \cite{loew:sett06}. Again this result is easily obtained
using Theorem~\ref{sterkesmurf}.

\section{Frege meets Cantor: an Example}\label{counter}

In this section, we provide an example of two finitely axiomatized, sequential theories that are bi-interpretable but not
synonymous. One of the two interpretations witnessing bi-interpretability is identity preserving.
The example given is meaningful: it is the comparison of a Frege-style weak set theory and a Cantor-style
weak set theory.\footnote{Since the first preprint of this paper came out in 2014, other separating examples were found.
See, e.g., \cite{enay:cate24}.}

The theory ${\sf ACF}_\flat$ is the one-sorted version of adjunctive class theory with Frege relation.\footnote{It would be
 more natural to give the example for the two-sorted theory {\sf ACF} and to use the notion of piecewise interpretation.
 The definitions of the interpretations and the verification that they form a bi-interpretation would be simpler.
 In fact, we would avoid the use of coding in our definitions.  However, we would need to develop  more of the theory
 of many-sorted interpretations to handle the superior approach smoothly. This is beyond the scope of our present paper.}
Our theory has unary predicates {\sf ob} and {\sf cl},  and
binary  predicates $\in$ and {\sf F}. Here {\sf F} is the Frege relation. We will write $x\mathop{:}{\sf ob}$ for ${\sf ob}(x)$,
$\forall x\mathop{:}{\sf ob} \ldots$ for $\forall x\,({\sf ob}(x) \to \ldots)$, $\exists x\mathop{:}{\sf ob} \ldots$ for $\exists x\,({\sf ob}(x) \wedge \ldots)$.
Similarly for {\sf cl}.
We have the following axioms,
\begin{enumerate}[${\sf ACF}_\flat$1.\/]
\item
$\vdash \forall x\, (x:\mathop{\sf ob} \vee x\mathop{:}{\sf cl})$,
\item
$\vdash \forall x\, \neg\,(x\mathop{:}{\sf ob} \wedge x\mathop{:}{\sf cl})$,
\item
$\vdash \forall x,y\, (x\in y \to (x\mathop{:}{\sf ob} \wedge y\mathop{:}{\sf cl}))$, 
\item
$\vdash \forall x,y \, (\freg{x}{y} \to (x\mathop{:}{\sf ob} \wedge y\mathop{:}{\sf cl}))$, 
\item
$\vdash \exists x\mathop{:}{\sf cl}\; \forall y\mathop{:}{\sf ob}\;\; y \not\in x$,
\item
$\vdash \forall x\mathop{:}{\sf cl}\;\forall y\mathop{:}{\sf ob}  \;\exists z\mathop{:}{\sf cl}
\;\forall w\mathop{:}{\sf ob}\;  (w\in z \iff (w\in x \vee w=y))$.
\item
$\vdash \forall x,y\mathop{:}{\sf cl}\; (\,\forall z\mathop{:}{\sf ob}\; (z\in x \iff z \in y) \to x=y)$.
 \item
 $\vdash\forall x\mathop{:}{\sf ob}\; \exists y\mathop{:}{\sf cl} \; \freg{x}{y}$,
 \item
 $\vdash\forall x\mathop{:}{\sf ob}\;\forall y,y'\mathop{:}{\sf cl}\; ((\freg{x}{y} \wedge \freg{x}{y'} ) \to y=y')$,
 \item
 $\vdash\forall x\mathop{:}{\sf cl}\; \exists y\mathop{:}{\sf ob}\; \freg{y}{x}$.
 \end{enumerate}
 
 \noindent
We provide  1-dimensional interpretations witnessing that {\sf AS} and ${\sf ACF}_\flat$ are bi-interpretable.
Note that by Theorem~\ref{lolsmurf}, it follows that ${\sf ACF}_\flat$ is sequential.

In the context of {\sf AS}, we write:
\begin{itemize}
\item
${\sf pair}(x,y,z) :\iff \exists u,v\,\forall w\, ((w\in u \iff w = x) \wedge\\
\hspace*{2.35cm}  (w\in v \iff (w=x\vee w=y)) \wedge (w\in z \iff (w=u \vee w = v)))$,
\item
${\sf Pair}(x) :\iff  \exists y,z\,{\sf pair}(y,z,x)$,
\item
$\pi_0(z,x) :\iff \exists y\,{\sf pair}(x,y,z)$, $\pi_1(z,y) :\iff \exists x\,{\sf pair}(x,y,z)$,
\item 
${\sf empty}(x) :\iff \forall y\,y\not\in x$, ${\sf inhab}(x) :\iff \neg\,{\sf empty}(x)$.
\item
$x\approx y :\iff \forall z\,(z\in x \iff z\in y)$.
\end{itemize}
We can verify the usual properties of pairing. The $\pi_i$ are functional on {\sf Pair}. 
We will write them using functional notation. We should remember that they are undefined outside
{\sf Pair}.
We first define an interpretation 
$L:{\sf ACF}_\flat\to {\sf AS}$.
\begin{itemize}
\item
$\updelta_L(x) :\iff  {\sf Pair}(x)$,
\item
${\sf ob}_L(x) :\iff {\sf Pair}(x) \wedge {\sf empty}(\pi_0(x))$,
\item
 ${\sf cl}_L(x) :\iff  {\sf Pair}(x) \wedge {\sf inhab}(\pi_0(x))$,
\item 
$x=_L y :\iff  (x,y\mathop{:}{\sf ob}_L  \wedge \pi_1(x)=\pi_1(y)) \vee
(x,y\mathop{:}{\sf cl}_L\wedge  \pi_1(x) \approx \pi_1(y))$.
\item
$x \in_L y :\iff x\mathop{:}{\sf ob}_L \wedge y\mathop{:}{\sf cl}_L \wedge \pi_1(x)\in \pi_1(y)$,
\item
$ \fregl{L}{x}{y} \mathop{:}\iff x:{\sf ob}_L \wedge y\mathop{:}{\sf cl}_L \wedge \pi_1(x) \approx \pi_1(y)$.
\end{itemize} 

\noindent 
It is easy to see that  the specified translation does carry an  interpretation
of ${\sf ACF}_\flat$ in {\sf AS}, as promised.  
Next, we define an interpretation
  $K:{\sf AS}\to {\sf ACF}_\flat$. 
\begin{itemize}
\item
$\updelta_K(x) :\iff x\mathop{:}{\sf ob}$,
\item
$x =_K y \iff x,y\mathop{:}{\sf ob}  \wedge x=y$,
\item 
$x\in_K y \iff  x,y\mathop{:}{\sf ob} \wedge \exists z\mathop{:}{\sf cl}\, (x\in z \wedge \freg{y}{z} ) $.
\end{itemize}

\noindent 
We note that $K$ is identity preserving.
The verification that our interpretations do indeed specify a bi-interpretation is
entirely routine. For completeness' sake, we provide the computations involved in  Appendix~\ref{verification}.

We show that {\sf AS} and ${\sf ACF}_\flat$ are not synonymous ---not even when we allow parameters. 
 We build the
following model $\mathcal M$ of {\sf AS}.
The domain is inductively specified as the smallest set $M$ such that
if $X$ is a finite subset of $M$, then $\tupel{0,X}$ and $\tupel{1,X}$ are in $M$. 
Let $m,n,\ldots$ range over $M$. We define: $m\in^\star n$ iff $n=\tupel{i,X}$ and
$m\in X$. It is easily seen that $\mathcal M$ is indeed a model of {\sf AS}.
Clearly, for any finite subset $X_0$ of $M$ we can find an automorphism $\sigma$
of $\mathcal M$ of order 2 that fixes $X_0$ and fixes only finitely many elements of $M$.

Suppose {\sf AS} and ${\sf ACF}_\flat$ were synonymous. Let $\mathcal N$ be the internal model
of ${\sf ACF}_\flat$ in $\mathcal M$ given by the synonymy. Say the interpretation is $P$, involving a
 finite set of parameters $X_0$. Let $\sigma$ be an automorphism
 of order 2 om $\mc M$ that fixes $X_0$ and that fixes at most finitely many objects.
 Consider the classes $\verz{p,\sigma p}^{\mathcal N}$, where $p$ is
 in ${\sf ob}^{\mathcal N}$. (Note that $\sigma$ must send $\mathcal N$-objects to $\mathcal N$-objects.)  
 Clearly there is an infinity of such classes. By extensionality these
classes are fixed by $\sigma$. This contradicts the fact that $\sigma$ has only finitely many fixed
points.

It is well known that if two models are bi-interpretable (without parameters) then their automorphism groups
are isomorphic. Our example shows that the {\em action} of these automorphism groups on the elements
can be substantially different.


\appendix

\section{Definitions}\label{definitions}

In this appendix, we provide detailed definitions of translations, interpretations and morphisms
between interpretations. 

\subsection{Translations}

Translations are the heart of our interpretations. In fact, they
are often confused with interpretations, but we will not
do that officially. In practice, it is often convenient to conflate
an interpretation and its underlying translation.

A proto-formula is a $\uplambda$-term $\uplambda \vec x. A(\vec x)$,
where the variables of the formula $A$ are among those in $\vec x$. We will think
of proto-formulas modulo $\alpha$-conversion and we will follow the
conventions of the $\uplambda$-calculus for substitution, where we avoid
variable-capture by renaming the bound variables. The arity is a proto-formula
is the length of $\vec x$. 

We define more-dimensional, one-sorted, one-piece
relative translations without parameters. 
Let $\Sigma$ and $\Theta$ be one-sorted signatures.
A  translation  $\tau:\Sigma\to \Theta$ is given
 by a triple $\tupel{m,\updelta,F}$. Here 
 $\updelta$ will be a $m$-ary proto-formula of signature $\Theta$.
 The mapping $F$  associates to each relation symbol $R$ of
$\Sigma$ with arity $n$ an $m\times n$-ary proto-formula of signature
$\Theta$.

We demand that predicate logic proves  $F(R)(\vec x_0,\ldots,\vec x_{n-1}) \to (\updelta (\vec x_0) \wedge \ldots \updelta (\vec x_{n-1}))$. Of 
course, given any candidate proto-formula $F(R)$ not satisfying the restriction, we can obviously modify it
to satisfy the restriction.

We translate $\Sigma$-formulas 
to $\Theta$-formulas  as follows. 
\begin{itemize}
\item 
$(R(x_0,\ldots,x_{n-1}))^\tau:= F(R)(\vec x_0,\ldots , \vec x_{n-1} )$.\\
{\footnotesize We use sloppy notation here. The single variable $x_i$ of the source language
needs to have
no obvious connection with the sequence of variables $\vec x_i$ of the target language 
that represents it.
We need some conventions to properly handle the association $x_i\mapsto \vec x_i$.
We do not treat these details here.
We demand that the $\vec x_i$ are  fully disjoint when the $x_i$ are different.} 
\item 
$(\cdot)^\tau$ commutes with the propositional connectives;
\item
$(\forall x\,A)^\tau := \forall \vec x\,(\updelta(\vec x\,)\to A^\tau)$;
\item 
$(\exists x\,A)^\tau := \exists \vec x\,(\updelta(\vec x\,)\wedge A^\tau)$.
\end{itemize}

\noindent 
Here are some convenient conventions and notations. 
\begin{itemize}
\item
We write $\updelta_\tau$ for `the $\updelta$ of $\tau$' and $F_\tau$ for `the
$F$ of $\tau$'.
\item
We write $R_\tau$ for $F_\tau(R)$.
\item
We write $\vec x\in\updelta_\tau$ for: $\updelta_\tau(\vec x\,)$.
\end{itemize}
There are some natural operations on translations.
The identity translation ${\sf id}:={\sf id}_\Theta$ is one-dimensional and it is defined by:
\begin{itemize}
\item
$\updelta_{\sf id}:= \uplambda x.(x=x)$,
\item
$R_{\sf id}:= \uplambda \vec x. R\vec x$.
\end{itemize}

\noindent
We can compose relative translations as follows.
Suppose $\tau$ is an $m$-dimensional translation from $\Sigma$ to $\Theta$,
and $\nu$\ is a $k$-dimensional translation from $\Theta$ to $\Xi$.
We define:
\begin{itemize}
\item
We suppose that with the variable $x$ we associate under $\tau$  the sequence $x_0,\ldots, x_{m-1}$
and under $\nu$ we send $x_i$ to $\vec x_i$.\\ 
$\updelta_{\tau\nu}(\vec x_0,\ldots, \vec x_{m-1}) := (\updelta_\nu(\vec x_0) \wedge
\ldots \wedge \updelta_\nu(\vec x_{m-1}) \wedge (\updelta_\tau(x))^\nu)$,
\item
Let $R$ be $n$-ary. Suppose that under $\tau$ we associate with $x_i$ the sequence
$x_{i,0},\ldots,x_{i,m-1}$ and that under $\nu$ we associate with $x_{i,j}$ the sequence $\vec x_{i,j}$.
We take:\\
$R_{\tau\nu}(\vec x_{0,0},\ldots \vec x_{n-1,m-1}) = 
\updelta_\nu(\vec  x_{0,0}) \wedge \ldots \wedge \updelta_\nu(\vec x_{n-1,m-1}) \ \wedge (R_\tau(x_0,\ldots,x_{n-1}))^\nu$.
\end{itemize}

\noindent 
A one-dimensional translation $\tau$ {\em preserves identity} if $(x=_\tau y) = (x=y)$.
A one-dimensional translation $\tau$ {\em is unrelativized} if $\updelta_\tau(x) = (x=x)$.  
A one-dimensional translation $\tau$ is  {\em direct} if it is unrelativized
and preserves identity.
Note that all these properties are preserved by composition.

Consider a model $\mathcal M$ with domain $M$ of signature $\Theta$
and $k$-dimensional  translation $\tau:\Sigma \to \Theta$.
Suppose that $N:=\verz{\vec m\bin M^k\mid \mathcal M \models \updelta_\tau \vec m}$.
Then $\tau$ specifies an internal model
$\mathcal N$ of $\mathcal M$ with domain 
$N$ and with $\mathcal N \models R(\vec m_0,\ldots,\vec m_{n-1}) $ iff 
$\mathcal M \models R_\tau(\vec m_0,\ldots,\vec m_{n-1})$.
We will write $\modf{\tau}(\mathcal M)$ 
 for the internal model of $\mathcal M$ given by $\tau$.
We treat the mapping $\tau,\mathcal M \mapsto \modf{\tau}{\mathcal M}$
as a partial function that is defined precisely if $\updelta_\tau^{\mathcal M}$
is non-empty.
Let $\textsf{Mod}$ or $\modf{(\cdot)}$ be the function that maps $\tau$
to $\modf{\tau}$.
We have:
\[ \textsf{Mod}(\tau\circ \rho)(\mathcal M) 
= (\textsf{Mod}(\rho)\circ\textsf{Mod}(\tau))(\mathcal M).\]
So, \textsf{Mod} behaves contravariantly.

\subsection{Relative Interpretations}\label{relint}

A translation $\tau$ supports a {\em relative interpretation} 
 of a theory $U$ in a theory $V$, if, 
for all $U$-sentences $A$, $U\vdash A \To V\vdash A^\tau$.
Note that this automatically takes care of the theory of identity
and assures us that $\updelta_\tau$ is inhabited.
We will write $K=\tupel{U,\tau,V}$ for the interpretation supported by $\tau$.
We write $K:U\to V$ for: $K$ is an interpretation of the form $\tupel{U,\tau,V}$.
If $M$ is an interpretation, $\tau_M$  will be its second component, so
$M=\tupel{U,\tau_M,V}$, for some $U$ and $V$.

\emph{Par abus de langage}, we write `$\updelta_K$' for
$\updelta_{\tau_K}$; we write `$R_K$'  for $R_{\tau_K}$ and we write `$A^K$' for $A^{\tau_K}$, etc.
Here are the definitions of three central operations on interpretations.
\begin{itemize}
\item
Suppose $T$ has signature $\Sigma$. We define:\\
${\sf id}_T:T\to T$ is $\tupel{T,{\sf id}_\Sigma,T}$.
\item
Suppose $K:U\to V$ and $M:V\to W$. We define:\\
$M\circ K:U\to W$ is $\tupel{U,\tau_M\circ\tau_K,W}$.
\end{itemize}
It is easy to see that we indeed correctly defined interpretations
between the theories specified.

\subsection{Equality of Interpretations}\label{equality}

Two interpretations
are {\em equal}  when the {\em target theory thinks they are}.
Specifically, we count two interpretations $K,K':U\to V$ as equal  if they have the same dimension, say $m$, and:
\begin{itemize}
\item
$V\vdash \forall \vec x\;( \updelta_K(\vec x\,)\iff\updelta_{K'}(\vec x\,))$,
\item
$V \vdash \forall \vec x_0,\ldots,\vec x_{n-1}\bin  \updelta_K \; (R_K(\vec x_0,\ldots,\vec x_{n-1})\iff  R_{K'}(\vec x_0,\ldots,\vec x_{n-1}))$.
\end{itemize}
Modulo this identification, the operations
identity and composition
give rise to a category ${\sf INT}_0$, where the theories are objects and
the interpretations arrows.\footnote{For many reasons, the choice for
the reverse direction of the arrows would be more natural.
However, our present choice coheres with the extensive tradition 
in degrees of interpretability. So, we opted to adhere to the present choice.}

Let \textsf{MOD} be the category with as objects classes of models
and as morphisms all functions between these classes.
We define $\textsf{Mod}(U)$ as the class of all models of $U$.
Suppose $K:U\to V$. Then, $\textsf{Mod}(K)$ is the function
from $\textsf{Mod}(V)$ to $\textsf{Mod}(U)$ given by:
$\mathcal M \mapsto \modf{K}({\mathcal M}) := \modf{\tau_K}({\mathcal M})$.
It is  clear that {\sf Mod} is a {\em contravariant functor} from ${\sf INT}_0$
to \textsf{MOD}.

\subsection{Maps between Interpretations}\label{defmorph}

Consider  $K,M\mathop{:}U\to V$. Suppose $K$ is $m$-dimensional and $M$ is $k$-dimensional.
  A $V$-definable, 
$V$-provable morphism from 
$K$ to $M$ is a triple
$\tupel{K,F,M}$, where $F$ is a $m+k$-ary proto-formula.\footnote{Since, in this stage, we are looking
at definitions without parameters we could, perhaps, better speak of \emph{$V$-0-definable}. Parameters may
be added but in the context where we consider theories rather than models some extra details are needed
to make everything work smoothly.}  
We write $\vec x\mathrel{F}\vec y$ for $F(\vec x,\vec y\,)$.
We demand that $F$ has  the following properties.
\begin{itemize}
\item
$V\vdash \vec x\mathrel{F}\vec y \to (\vec x \in \updelta_K \wedge \vec y \in \updelta_M)$.
\item
$V\vdash \vec x=_K \vec u\mathrel{F} \vec v=_M\vec y \to \vec x\mathrel{F}\vec y$.
\item
$V\vdash \forall \vec x \bin  \updelta_K\,\exists \vec y\bin  \updelta_M\; \vec x\mathrel{F} \vec y$.
\item
$V\vdash (\vec x\mathrel{F}\vec y \wedge \vec x\mathrel{F}\vec z\,) \to \vec y=_M \vec z$. 
\item
$V\vdash (\vec x_0F\vec y_0 \wedge \ldots \wedge \vec x_{n-1}F\vec y_{n-1} \wedge 
R_K(\vec x_0,\ldots,\vec x_{n-1})) \to R_M(\vec y_0,\ldots,\vec y_{n-1})$.

\end{itemize}

\noindent We will call the arrows between interpretations: {\em i-maps}
or {\em i-morphisms}.
We write $F\mathop{:}K\to M$ for: $\tupel{K,F,M}$ is a $V$-provable,
$V$-definable morphism from $K$ to $M$. Remember that 
the theories $U$ and $V$  are part of the data for $K$ and $M$. 
We consider $F,G\mathop{:}K\to M$ as {\em equal}  when they are $V$-provably the same. 

An isomorphism of interpretations is easily seen to be a morphism with the 
following extra properties.
\begin{itemize}
\item
$V\vdash \forall \vec y\bin  \updelta_M\,\exists \vec x\bin \updelta_K\; \vec x\mathrel{F}\vec y$,
\item
$V\vdash (\vec x\mathrel{F}\vec y\wedge \vec z\mathrel{F}\vec y\,) \to \vec x=_K\vec z$,
\item
$V\vdash (\vec x_0F\vec y_0 \wedge \ldots \wedge  \vec x_{n-1}F\vec y_{n-1} \wedge 
R_M(\vec y_0,\ldots,\vec y_{n-1})) \to R_K(\vec x_0,\ldots,\vec x_{n-1})$.
\end{itemize}

\noindent 
We call such isomorphisms: i-isomorphisms.
By a simple compactness argument one may prove:

\begin{theorem}\label{isoloc}
Suppose the signature of $U$ is finite.
Consider $K,M:U\to V$. Suppose that, for every model $\mathcal N$ of
$V$, there is an  $\mathcal N$-definable isomorphism
between $\modf{K}({\mathcal N})$ and $\modf{M}({\mathcal N})$.
Then, $K$ and $M$ are i-isomorphic. 
\end{theorem}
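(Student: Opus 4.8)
The plan is to use the finiteness of the signature of $U$ to express ``$\phi$ defines an i-isomorphism $K\to M$'' by a single first-order sentence in the language of $V$, and then to manufacture one uniform defining formula out of the per-model ones by compactness.

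Say $K$ is $m$-dimensional and $M$ is $k$-dimensional, so a candidate i-isomorphism is a proto-formula $\phi(\vec x,\vec y\,)$ with $\vec x$ of length $m$ and $\vec y$ of length $k$. First I would introduce, for each such formula $\phi$ of the language of $V$, the sentence $\mathrm{Iso}(\phi)$ that collects the clauses defining an i-isomorphism $K\to M$ for $\phi$: that $\phi$ implies $\vec x\in\updelta_K\wedge\vec y\in\updelta_M$, is compatible with $=_K$ and $=_M$ on both sides, is total, functional, injective and surjective between the $=_K$-quotient of $\updelta_K$ and the $=_M$-quotient of $\updelta_M$, and, for each relation symbol $R$ of $U$, preserves and reflects $R$ along $\phi$. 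The hypothesis on $U$ is spent exactly here: there are only finitely many such $R$, so $\mathrm{Iso}(\phi)$ is a finite conjunction and hence a genuine first-order sentence, and for any $\mathcal N\models V$ one has $\mathcal N\models\mathrm{Iso}(\phi)$ iff $\phi^{\mathcal N}$ is an isomorphism $\modf K(\mathcal N)\to\modf M(\mathcal N)$.

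Next I would run a compactness argument. By hypothesis every model $\mathcal N$ of $V$ satisfies $\mathrm{Iso}(\phi)$ for some $\phi$, so $V\cup\{\neg\,\mathrm{Iso}(\phi):\phi\}$ has no model; by the Completeness and Compactness Theorems there are finitely many $\phi_1,\dots,\phi_r$ with $V\vdash\mathrm{Iso}(\phi_1)\vee\cdots\vee\mathrm{Iso}(\phi_r)$. I would then splice these into one proto-formula by a ``least index that works'' device: since each $\mathrm{Iso}(\phi_i)$ is a sentence, it can serve as a propositional guard inside a larger formula, and
\[
F(\vec x,\vec y\,):=\bigvee_{i=1}^{r}\Bigl(\mathrm{Iso}(\phi_i)\wedge\bigwedge_{j<i}\neg\,\mathrm{Iso}(\phi_j)\wedge\phi_i(\vec x,\vec y\,)\Bigr)
\]
is a proto-formula in $\vec x,\vec y$. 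In any $\mathcal N\models V$, taking $i_0$ least with $\mathcal N\models\mathrm{Iso}(\phi_{i_0})$ (one exists by the displayed disjunction), $F$ is equivalent in $\mathcal N$ to $\phi_{i_0}$ and therefore defines an isomorphism $\modf K(\mathcal N)\to\modf M(\mathcal N)$; since this holds in every model, $V\vdash\mathrm{Iso}(F)$, so $F$ witnesses that $K$ and $M$ are i-isomorphic.

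I do not expect a real obstacle; the one step worth care is the first, since without finiteness of the signature of $U$ the conjunction over relation symbols in $\mathrm{Iso}(\phi)$ would be infinite and the reduction via compactness would fail. Everything after the compactness step is formal bookkeeping. (If ``$\mathcal N$-definable'' is read so as to allow parameters, the same recipe applies with $\mathrm{Iso}(\phi)$ replaced by its existential closure over the parameter variables, yielding an i-isomorphism with parameters.)
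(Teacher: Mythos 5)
Your proof is correct and is precisely the ``simple compactness argument'' the paper alludes to without spelling out: finiteness of the signature of $U$ makes ``$\phi$ defines an i-isomorphism'' a single sentence $\mathrm{Iso}(\phi)$, compactness yields a provable finite disjunction, and the guarded-disjunction splicing produces one uniform witness. Your closing remarks on where finiteness is used and on the parameter reading are also consistent with the paper's setup.
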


\subsection{Adding Parameters}

We can add parameters in the obvious way. An interpretation $K:U\to V$ with parameters
will have a $k$-dimensional parameter domain $\alpha$ (officially a proto-formula), where $V\vdash \exists \vec x\;\alpha\vec x$.
We allow the extra variables $\vec x$ to occur in the translations of the $U$ formulas. We have to take
the appropriate measures to avoid variable-clashes. The condition for $K$ to be an interpretation changes into:
$V \vdash \forall \vec x\;(\alpha\vec x \to A^{K,\vec x})$, where $A$ is a theorem of $U$.

We note that, in the presence of parameters, the function $\widetilde K$ associates a class of models of $U$ to a model of $V$.

Similar adaptations are needed to define i-isomorphisms with parameters.

\section{Background for Sequentiality and Conceptuality}\label{seqplus}

The notion of sequential theory was introduced by Pavel Pudl\'ak in his 
paper \cite{pudl:prime83}. 
Pudl\'ak uses his notion for the study of
the degrees of local multi-dimensional parametric interpretability.
He proves that sequential theories are prime in this degree structure.
In \cite{pudl:cuts85},  sequential theories provide the right level
of generality for theorems about consistency statements.

The notion of sequential theory was independently invented by Friedman
who called it {\em adequate theory}. See Smory\'nski's survey 
\cite{smor:nons85}.
Friedman uses the notion to provide the Friedman characterization of interpretability
among finitely axiomatized sequential theories. (See also \cite{viss:inte90} 
and \cite{viss:insi92}.) Moreover, he shows that ordinary interpretability and
faithful interpretability among finitely axiomatized sequential theories
coincide. (See also \cite{viss:unpr93} and \cite{viss:faith05}.)

The story of the weak set theory {\sf AS} can be traced in the following papers:
\cite{szmi:mutu52},    \cite{coll:inte70}, \cite{pudl:cuts85}, \cite{nels:pred86},  \cite{mont:mini94},
  \cite{myci:latt90} (appendix III),  \cite{viss:pair08},   \cite{viss:card09}.
  The connection between {\sf AS} and sequentiality is made in \cite{pudl:cuts85} and  \cite{myci:latt90}.

For further work concerning sequential theories, see, e.g.,
\cite{haje:meta91}, \cite{viss:unpr93},
 \cite{viss:over98}, \cite{joos:inte00}, \cite{viss:faith05},  \cite{viss:what13}.
 The paper \cite{viss:what13} gives surveys many aspects of sequentiality.
 
 A theorem that is relevant in  this paper is Theorem 10.7 of \cite{viss:cate06}:
 
 \begin{theorem}\label{lolsmurf}
  Sequentiality is preserved to ${\sf INT}_1$-retracts for one-dimensional interpretations. In other words: if $V$ is sequential and
  if $U$ is a one-dimensional retract in ${\sf INT}_1$ of $V$, then $U$ is sequential. 
  \end{theorem}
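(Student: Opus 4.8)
The plan is to reduce the statement to the characterisation of sequentiality recalled in Section~\ref{sequentiality}: a theory $T$ is sequential exactly when it admits an $\mathfrak o$-direct interpretation of {\sf ac} that is one-dimensional in the interpretation of classes. So fix the one-dimensional interpretations $K\colon U\to V$ and $M\colon V\to U$ witnessing that $U$ is an ${\sf INT}_1$-retract of $V$, together with a $U$-definable i-isomorphism $F\colon {\sf id}_U\to M\circ K$. Since $V$ is sequential, fix an $\mathfrak o$-direct interpretation $C\colon{\sf ac}\to V$ that is one-dimensional in classes, and form the composite $M\circ C\colon{\sf ac}\to U$. Read off what this composite says inside any model of $U$: it produces a \emph{full} copy of an {\sf ac}-structure whose object domain is exactly $\updelta_M$, whose class domain is the (still one-dimensional) class $\updelta^{M\circ C}_{\mathfrak c}\subseteq\updelta_M$, and in which {\sf ac}1 and {\sf ac}2 hold with all quantifiers relativised to $\updelta_M$; in particular adjunction is available for \emph{all} pairs of $\updelta_M$-elements.

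The key move is to \emph{transport} this class structure from $\updelta_M$ onto the whole domain of $U$ along $F$. The i-isomorphism matches the full domain of $U$, bijectively, with $\updelta_{M\circ K}$, and $\updelta_{M\circ K}\subseteq\updelta_M$. Using this matching I would define $C'\colon{\sf ac}\to U$ by: the object domain is everything ($\updelta^{C'}_{\mathfrak o}(x):\iff x=x$), the class domain is $\updelta^{M\circ C}_{\mathfrak c}$, object-membership of $x$ in a class $c$ is ``the $F$-correspondent of $x$ is a member of $c$ in the sense of $M\circ C$'', object equality is literal equality, and class equality is extensional sameness of object-members --- legitimate because, as the paper notes, extensionality for classes may simply be \emph{defined}. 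Then {\sf ac}1 transfers, since a class that is empty in the $\updelta_M$-copy is a fortiori empty for $C'$ (the range of the correspondence lies inside $\updelta_M$); and {\sf ac}2 transfers, since given a class $c$ and an object $y$ the adjunction of $c$ with the $F$-correspondent of $y$ already exists inside the $\updelta_M$-copy, and its $C'$-object-members are precisely those of $c$ together with $y$. Thus $C'$ is an $\mathfrak o$-direct interpretation of {\sf ac} in $U$ that is one-dimensional in classes, and the cited characterisation yields that $U$ is sequential.

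The step I expect to cost the most care is verifying that $C'$ really is an interpretation, and the delicate point there is the interaction of $F$ with the interpreted equalities. The correspondence given by $F$ is only determined modulo the equality $=_{M\circ K}$ on $\updelta_{M\circ K}$, whereas the membership relation of $M\circ C$ respects only $=_M$, and $=_{M\circ K}$ is in general strictly coarser; so the naive transported membership need not be well defined. The clean fix is to make the transported relation $=_{M\circ K}$-insensitive by design --- declare $x$ a $C'$-member of $c$ iff \emph{some} element $=_{M\circ K}$-equal to the $F$-correspondent of $x$ is an $M\circ C$-member of $c$ --- and then to check, using that the $F$-correspondents of one object form exactly one $=_{M\circ K}$-class and that $F$ separates distinct objects even modulo $=_{M\circ K}$, that {\sf ac}1--{\sf ac}3 still go through verbatim. (When the retract can be taken with $K$ identity-preserving, $=_{M\circ K}$ collapses to $=_M$ and the issue disappears.) I note that this argument uses \emph{no} instance of the Schr\"oder--Bernstein theorem of Section~\ref{schrbe}: the retract hypothesis already hands us a full sequential copy of $V$ sitting inside $U$ together with a structure-respecting embedding of $U$'s domain into it, which is exactly the data the {\sf ac}-characterisation of sequentiality requires.
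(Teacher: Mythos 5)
Your proposal is correct and follows essentially the same route as the paper: the paper's proof likewise pulls the coding of $V$ back along the i-isomorphism $F$, setting $x\in^\ast y$ iff $y\in\updelta_M$ and $(z\in^\star y)^M$ for some $z$ with $z\mathrel{F}x$ --- precisely your existential definition of transported membership, which also settles the well-definedness worry because $=_M$ refines $=_{M\circ K}$ on $\updelta_{M\circ K}$, so distinct $F$-correspondents cannot spoil the adjunction clause. The only difference is cosmetic: the paper uses the {\sf AS}-characterization of sequentiality directly, defining a single formula $\in^\ast$, rather than routing through the {\sf ac}-characterization as you do.
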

  
  \begin{proof}
  Suppose $K:U\to V$ and  $M:V\to U$ are one-dimensional and $M\circ K$ is i-isomorphic to ${\sf id}_U$ via $F$.
  Let $\in^\star$ be the $V$-formula witnessing the sequentiality of $V$.
  We define the $U$-formula $\in^\ast$ witnessing the sequentiality of $U$ by:
  $x\in^\ast y$ iff $y$ is in $\updelta_M$ and, for some $z$ with $z\mathop{F}x$, we have
   $(z\in^\star y)^M$. 
  \end{proof}
  
  \noindent This result  holds only for one-dimensional interpretability.
  There are examples of non-sequential theories that are bi-interpretable with a
  sequential theory.
   Since bi-interpretablity is such a good
  notion of sameness of theories, one could argue that the failure of closure of sequential theories under bi-interpretability
  is a defect and that we need a slightly more general notion to fully reflect the intuitions that sequentiality is intended
  to capture. For an elaboration of this point, see \cite{viss:what13}.

We can easily adapt Theorem~\ref{lolsmurf}, to obtain: 
 
  \begin{theorem}
 Conceptuality is preserved to ${\sf INT}_1$-retracts. 
  \end{theorem}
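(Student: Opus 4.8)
The plan is to mimic the proof of Theorem~\ref{lolsmurf} verbatim, replacing the witnessing $\in^\star$-formula by the $\mathfrak{o}$-direct interpretation of $\mathsf{ac}$ that witnesses conceptuality, and checking that the construction used there respects the single sort $\mathfrak o$. Suppose $K:U\to V$ and $M:V\to U$ witness that $U$ is an $\mathsf{INT}_1$-retract of $V$, i.e.\ $M\circ K$ is i-isomorphic to $\mathsf{id}_U$ via some $U$-formula $F$. Since $V$ is conceptual, fix an $\mathfrak o$-direct interpretation $N:\mathsf{ac}\to V$; so $N$ assigns the object sort the full domain with literal identity, and assigns to the class sort some domain formula $\updelta_{N,\mathfrak c}$, a membership formula $(\in)_N$, and an equality formula $(=_{\mathfrak c})_N$, with $V$ proving $\mathsf{ac}1$--$\mathsf{ac}3$ under this translation.

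First I would transport $N$ through $M$ and then correct along $F$. Concretely, define a candidate interpretation $N':\mathsf{ac}\to U$ by: the object domain of $N'$ is all of $U$ (literal identity on objects); a class of $N'$ is an element $c$ of $\updelta_{M}$ with $(\updelta_{N,\mathfrak c})^M(c)$; class-equality is $((=_{\mathfrak c})_N)^M$ restricted to such $c$; and the membership relation is given by
\[
  x \mathbin{(\in)_{N'}} c \;:\iff\; \exists z\,\bigl(z\mathop{F}x \wedge (z \mathbin{(\in)_N} c)^M\bigr).
\]
Here the existential quantifier over $z$ ranges over $\updelta_M$, and we use that $F$ is an i-isomorphism between $M\circ K$ and $\mathsf{id}_U$, so that $F$ is total and functional from the full $U$-domain into $\updelta_{M\circ K}=\updelta_M^K\cap\updelta_M$ modulo the relevant equalities; since $K$ and $M$ are one-dimensional and we only care about the object sort, this is exactly the same bookkeeping as in Theorem~\ref{lolsmurf}. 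The key point is that $N'$ is again $\mathfrak o$-direct: the object domain is unrelativized and object-identity is literal, because $F$ lives over the single object sort and $M\circ K$ is the identity there up to $F$.

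Next I would verify that $U$ proves the translations of $\mathsf{ac}1$--$\mathsf{ac}3$ under $N'$. This is pure pullback: $V$ proves $A^N$ for each axiom $A$ of $\mathsf{ac}$, hence $U$ proves $(A^N)^M = A^{N\circ M}$; and since $F:M\circ K\to\mathsf{id}_U$ is an i-isomorphism, $U$ also proves that $F$ transports $N\circ M$ to $N'$ as $\mathsf{ac}$-structures. For $\mathsf{ac}3$ (extensionality on classes) nothing needs transporting since class-equality of $N'$ is defined directly as the $M$-image of class-equality of $N$; alternatively one can always pass to the extensional quotient as in the remark after the definition of $\mathsf{ac}$. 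For $\mathsf{ac}1$ and $\mathsf{ac}2$ one uses that $F$ is a bijection between the full $U$-object-domain and the $N\circ M$-object-domain, so ``empty'' and ``adjunction'' transfer along $F$. This gives an $\mathfrak o$-direct interpretation of $\mathsf{ac}$ in $U$, i.e.\ $U$ is conceptual.

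The only mild obstacle is the same one already handled in Theorem~\ref{lolsmurf}: keeping straight the three layers of variables (the $U$-object $x$, its $F$-image $z$ living in $\updelta_M$, and the $M$-translation acting on $z$), and checking that the $V$-provable morphism conditions for $F$ (totality, functionality, respecting $R$) suffice to make $N'$ well-defined and to push the axioms through. Since $K,M$ are one-dimensional this is routine; in particular, unlike the sequential case one does \emph{not} need to first replace $K,M$ by one-dimensional interpretations, because conceptuality, via an $\mathfrak o$-direct interpretation of $\mathsf{ac}$, only constrains the object sort, which is one-dimensional by fiat. I therefore expect the proof to be a two-line adaptation of the proof of Theorem~\ref{lolsmurf}, with the phrase ``let $\in^\star$ be the $V$-formula witnessing sequentiality'' replaced by ``let $N:\mathsf{ac}\to V$ be an $\mathfrak o$-direct interpretation witnessing conceptuality'' and the conclusion drawn for $\mathsf{ac}$ rather than for $\mathsf{AS}$.
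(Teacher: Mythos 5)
Your construction is exactly the adaptation the paper has in mind: the paper offers no details beyond ``we can easily adapt Theorem~\ref{lolsmurf}'', and transporting the $\mathfrak o$-direct interpretation $N:\mathsf{ac}\to V$ through $M$ and correcting the object sort along the i-isomorphism $F:M\circ K\to {\sf id}_U$ is that adaptation. Two points in your write-up are not right as stated, though. First, the dimension claim. In your membership formula $x\mathbin{(\in)_{N'}}c:\iff\exists z\,(z\mathop{F}x\wedge(z\mathbin{(\in)_N}c)^M)$ the witness $z$ must lie in $\updelta_{M\circ K}$ (to be $F$-related to $x$), so it is a $(\dim K\cdot\dim M)$-tuple, while as an argument of the $M$-translation of the object-sort variable of $N$ it must be a $(\dim M)$-tuple; these agree only when $\dim K=1$. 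So what your argument actually shows is that the dimension of $M$ is irrelevant but $K$ must be one-dimensional (in which case $\updelta_{M\circ K}\subseteq\updelta_M$ and everything lines up, exactly as in Theorem~\ref{lolsmurf}); your concluding claim that no dimensionality condition at all is needed --- indeed you lean on ``since $K,M$ are one-dimensional'' earlier and then disavow it --- is not supported by the construction. For multi-dimensional $K$, a $U$-object corresponds via $F$ to a $\dim K$-tuple of internal $V$-objects, and mere conceptuality of $V$ gives no classes of tuples, so some further idea (or a restriction) is required.

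Second, $\mathsf{ac}3$ is not ``pure pullback'' if you take class-equality of $N'$ to be $((=_{\mathfrak c})_N)^M$. Your statement that $F$ is a bijection between the full $U$-domain and the $N\circ M$-object-domain is inaccurate: $F$ is a bijection (modulo the internal equalities) onto $\updelta_{M\circ K}$, which is in general a proper subclass of $\updelta_M$. Hence two transported classes can differ only on elements of $\updelta_M$ outside that subclass, making them $((=_{\mathfrak c})_N)^M$-distinct while having exactly the same $N'$-members, so extensionality for the new membership fails. Your parenthetical fallback is the correct official definition: take class-equality of $N'$ to be coextensionality with respect to $(\in)_{N'}$ (the ``cheap extensionality'' remark after the definition of $\mathsf{ac}$). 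With that choice, and with the observation that for $z,w\bin\updelta_{M\circ K}$ one has $z=_Mw\To z=_{M\circ K}w$ (since $V$ proves that identity implies $=_K$ on $\updelta_K$), the verification of $\mathsf{ac}1$ and $\mathsf{ac}2$ goes through as you sketch: adjoin the $F$-preimage of $y$ inside the internal $V$-structure and use the congruence, functionality and injectivity clauses for $F$ to see that exactly $y$ is added.
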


\section{Verification of Bi-interpretability}\label{verification}

We verify that the interpretations $K$ and $L$ of Section~\ref{counter} do indeed
form a bi-interpretation.
We first compute $M:=(L\circ K):{\sf AS} \to {\sf AS}$. 
We find:
\begin{itemize}
\item
$\updelta_M(x) \iff (\updelta_L(x) \wedge (x\in \updelta_K)^L) \iff  (x:{\sf Pair} \wedge {\sf empty}(\pi_0(x)))$,
\item
We have (using the contextual information that $x$ and $y$ are in $\updelta_M$):
\begin{eqnarray*}
x=_M y & \iff &  (x=_Ky)^L \\ & \iff & \pi_1(x) = \pi_1(y)
\end{eqnarray*}
\item
We have (using the contextual information that $x$ and $y$ are in $\updelta_M$):
\begin{eqnarray*}
x \in_M y & \iff &  (\exists z{:}{\sf cl}\, (x\in z \wedge \freg{y}{z} ))^L \\
& \iff &  \exists z{:}{\sf pair}\,({\sf inhab}(\pi_0(z)) \wedge \pi_1(x)\in\pi_1(z) \wedge \pi_1(y) \approx \pi_1(z)) \\
& \iff & \pi_1(x) \in \pi_1(y)
\end{eqnarray*}
\end{itemize}
Clearly $\pi_1$ is the desired isomorphism from $M$ to ${\sf id}_{\sf AS}$.

In the other direction, let $N:= (K \circ L):{\sf ACF}_\flat\to {\sf ACF}_\flat$.
We first note a simple fact about $K$.
Since {\sf F} is functional on classes
 we will use functional notation for it. We  have, for $u,v:{\sf ob}$,
 \begin{eqnarray*}
 u\approx_K v & \iff & \forall w\,(w\in_K u \iff w \in_K v) \\
  & \iff & \forall w\,(w\in {\sf F}(u) \iff w\in{\sf F}(v)) \\
  & \iff & {\sf F}(u)={\sf F}(v).
 \end{eqnarray*}

\noindent We have:

\begin{itemize}
\item
$\updelta_N(x) \iff (x \in \updelta_K(x) \wedge
(\updelta_L(x))^K) \iff (x:{\sf ob} \wedge {\sf Pair}^K(x))$,
\item
 ${\sf ob}_N(x) \iff  ({\sf empty}(\pi_0(x)))^K$,
 \item
 ${\sf cl}_N(x) \iff  ({\sf inhab}(\pi_0(x)))^K$,
\item
$x =_N y \iff  (({\sf ob}_N(x) \wedge {\sf ob}_N(y) \wedge \pi_1^K(x)=\pi^K_1(y)) \vee \\ 
\hspace*{3cm}
({\sf cl}_N(x) \wedge {\sf cl}_N(y) \wedge {\sf F}(\pi_1^K(x))={\sf F}(\pi^K_1(y))))$,
\item
$x\in_N y   \iff   ({\sf ob}_N(x) \wedge {\sf cl}_N(y) \wedge \pi^K_1(x)\in_K\pi^K_1(y))$.  
\item
$\fregl{N}{x}{y} :\iff ({\sf ob}_N(x) \wedge {\sf cl}_N(y) \wedge {\sf F}(\pi^K_1(x))={\sf F}(\pi^K_1(y)))$. 
\end{itemize}
We define $G:N\to {\sf id}_{{\sf ACF}_\flat}$ as follows:
\begin{itemize}
\item
$x\mathrel{G} y :\iff ({\sf ob}_N(x) \wedge {\sf ob}(y) \wedge \pi_1^K(x)=y) \vee ({\sf cl}_N(x) \wedge {\sf cl}(y) \wedge {\sf F}(\pi_1^K(x)) =y)$.
\end{itemize}
We note that $K$ is identity preserving. Thus, e.g., we find that ${\sf pair}^K$ is a true pairing on {\sf ob}.
It is easy to see that, in ${\sf ACF}_\flat$, the virtual classes ${\sf ob}_N$ and ${\sf cl}_N$ form a partition of
$\updelta_N$. It is now trivial to check that $G$ is indeed an isomorphism.

\end{document}